\documentclass[11pt,leqno]{amsart}
\usepackage[english]{babel}
\usepackage[T1]{fontenc} 
\usepackage[utf8]{inputenc}

\usepackage{amsmath}
\usepackage{amsthm}
\usepackage{amsfonts,amssymb,mathtools}

\usepackage{mathtools}
\usepackage[mathcal]{eucal}

\usepackage{hyperref}
\usepackage[nice]{nicefrac}

\addtolength{\hoffset}{-1cm}
\addtolength{\textwidth}{1.6cm}
\addtolength{\voffset}{-1cm}
\addtolength{\textheight}{1.6cm}

\usepackage{microtype}
\usepackage{chngcntr}
\usepackage{ifthen}

\usepackage{mathrsfs}

\usepackage{ dsfont }

\usepackage{graphicx}
\usepackage[all]{xy}


\theoremstyle{plain}

\newtheorem{thm}{Theorem}[section]
\newtheorem{lem}[thm]{Lemma}
\newtheorem{prop}[thm]{Proposition}
\newtheorem{cor}[thm]{Corollary}

\theoremstyle{remark}

\numberwithin{equation}{section}



\DeclareMathOperator{\Cl}{Cl}
\DeclareMathOperator{\Gal}{Gal}

\DeclareMathOperator{\rk}{rk}

\DeclareMathOperator{\ord}{ord}


\title[On the $16$-rank of class groups]{On the 16-rank of class groups of $\mathbb{Q}(\sqrt{-3p})$ for primes $p$ congruent to $1$ modulo $4$}

\author[M. Piccolo]{Margherita Piccolo} 

\address{Margherita Piccolo:
  Mathematisches Institut, Heinrich-Heine-Universit\"at, 40225
  D\"usseldorf, Germany} \email{margherita.piccolo@hhu.de}
  
\keywords{Class groups, Asymptotic results, Sieve methods}

\subjclass[2010]{Primary 11N45, 11R29; Secondary 11R44, 11N36}


\begin{document}

\begin{abstract}
\vspace{5pt}

For fixed $q\in\{3,7,11,19, 43,67,163\}$, we consider the density of primes $p$ congruent to $1$ modulo $4$ such that the class group of the number field $\mathbb{Q}(\sqrt{-qp})$ has order divisible by $16$. We show that this density is equal to $1/8$, in line with a more general conjecture of Gerth.
Vinogradov's method is the key analytic tool for our work.
\end{abstract}

\maketitle


\section{Introduction}

\vspace{5pt}

The study of the 2-parts of the class groups of quadratic number fields is an active area of research. We recall that, for $k\in \mathbb{N}$, the $2^k$-rank of a finite abelian group $G$ is the dimension of the $\mathbb{F}_2$-vector space $2^{k-1}G/2^k G$.
Milovic~\cite{Milovic} studied the density for the 16-rank in certain particular thin families of quadratic number fields. 
Koymans and Milovic~\cite{16-rank}~\cite{-p} proved density results for the 16-rank in families of imaginary quadratic number fields of the form $\mathbb{Q}(\sqrt{-p})$ for primes $p$ and $\mathbb{Q}(\sqrt{-2p})$ for primes $p$ congruent to 1 modulo 4. \newline
These results are in line with Gerth's conjecture~\cite{Gerth}, which extends a conjecture of Cohen and Lenstra~\cite{C-L} to include the 2-part. It is expected that the group $2\Cl(K)[2^\infty]$ satisfies the Cohen--Lenstra heuristic, where $K$ varies over imaginary quadratic number fields and $\Cl(K)[2^\infty]$ denotes the 2-part of the class group $\Cl(K)$. More recently, Smith~\cite{Smith} proved Gerth's conjecture and he gave a new powerful method to study the 2-part of class groups, but it is uncertain whether this new method is applicable to \emph{thin} families that we are about to consider.

Our aim is to continue the work of Koymans and Milovic, by proving results for the 16-rank of the class groups of \emph{thin} families of imaginary quadratic number fields.
The first natural case to consider is $\mathbb{Q}(\sqrt{-3p})$, in accordance with the title of the article.
For technical reasons, we restrict to primes $p$ congruent to 1 modulo 4, so that only two primes divide the discriminant. In this situation, we obtain that the 2-part of the class group $\Cl(\mathbb{Q}(\sqrt{-3p}))$ is non-trivial and cyclic, by Gauss genus theory.

\noindent Moreover, our approach to $\mathbb{Q}(\sqrt{-3p})$, extends to families $K_{p,q}:=\mathbb{Q}(\sqrt{-qp})$ with fixed $q \in Q:=\{3,7,11,19, 43,67,163\}$ and $p$ varying over all primes congruent to 1 modulo 4.
The elements in $Q$ are the complete list of primes $q$ congruent to 3 modulo 4 such that the field $\mathbb{Q}(\sqrt{-1}, \sqrt{q})$ is a principal ideal domain (see~\cite{Uchida}). These conditions are useful in the technical considerations of the analytic part of our work.

Our main result is the following.\newpage
\begin{thm}
	\label{Result}
	Let $q\in \{3,7,11,19,43,67,163\}$ be fixed. For primes $p$, let $h(-qp)$ denote the class number of the imaginary quadratic number field $K_{p,q}=		\mathbb{Q}(\sqrt{-qp})$. For each prime $p$ congruent to 1 modulo 4, set
	\[
	e_p=
		\begin{cases}1&\text{if }16\mid h(-qp),\\
		-1&\text{if } 8\mid h(-qp)\text{ but }16\nmid h(-qp),\\
		0&\text{otherwise.}
		\end{cases}
	\]
	Then we have 
	\[
	\sum_{\substack{p\leqslant x\\p\equiv 1 \bmod 4}}e_p\ll x^{1-\frac{1}{3200}} \quad \quad \text{for }x>0.
	\]
\end{thm}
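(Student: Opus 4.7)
The plan is to express $e_p$ as the value at $p$ of an explicit \emph{governing symbol}, and then to exhibit cancellation in the resulting prime sum via Vinogradov's method applied to Gaussian integers.

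First, I would use Gauss genus theory: since $p\equiv 1\bmod 4$ and only the two primes $q,p$ ramify in $K_{p,q}$, the $2$-part of $\Cl(K_{p,q})$ is cyclic. Classical criteria of R\'edei and Reichardt then identify $4\mid h(-qp)$ and $8\mid h(-qp)$ with explicit splitting conditions on $p$ in a short tower of biquadratic extensions built from $q$, $-1$, and a solution to a Pell-type equation (available because $p\equiv 1\bmod 4$, so in particular $-qp=a^2+b^2$ for some $a,b\in\mathbb{Z}$). Under the hypothesis $8\mid h(-qp)$, the remaining sign $e_p=\pm 1$ is expected to be governed by a rational quartic residue symbol of the shape $\left[\alpha/\pi\right]_4$, where $\pi\in\mathbb{Z}[i]$ is a Gaussian prime with $\pi\bar\pi=p$ and $\alpha\in\mathbb{Z}[i,\sqrt{q}]$ is an element encoding the $8$-divisibility data. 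The hypothesis $q\in Q$, i.e.\ that $\mathbb{Q}(i,\sqrt{q})$ is a PID, is precisely what makes such $\alpha$ well-defined up to units and forces the symbol to depend only on $\pi$ modulo a fixed Gaussian modulus.

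With this reduction, the sum of the theorem becomes (up to a bounded multiplicity)
\[
\sum_{\substack{p\leqslant x\\p\equiv 1\bmod 4}}e_p
=\sum_{\substack{\pi\in\mathbb{Z}[i]\\ N(\pi)\leqslant x}}\mathbf{1}(\pi)\left[\tfrac{\alpha}{\pi}\right]_4,
\]
where $\mathbf{1}(\pi)$ selects one Gaussian prime above each rational prime $p\equiv 1\bmod 4$ satisfying the $8$-divisibility condition. I would then apply a Vaughan or Heath-Brown decomposition of the von Mangoldt function of $\mathbb{Z}[i]$, reducing the problem to bilinear sums of Type I and Type II over Gaussian integers. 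Type I sums are controlled by P\'olya--Vinogradov in $\mathbb{Z}[i]$: the symbol $[\alpha/\cdot]_4$ is a Hecke character modulo a fixed ideal, and therefore equidistributes in boxes. Type II sums are treated by Cauchy--Schwarz followed by quartic reciprocity, which swaps numerator and denominator and leaves a Gaussian character sum of variable modulus, again amenable to P\'olya--Vinogradov after careful handling of the resulting Jacobi symbols on $\mathbb{Z}[i]$.

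The principal obstacle is the Type II estimate. Because the relevant character is a Hecke character of the Gaussian prime $\pi$ rather than a Dirichlet character of the rational prime $p$, one cannot simply invoke Burgess-type bounds on real characters, and the sum must be performed genuinely on $\mathbb{Z}[i]$. After Cauchy--Schwarz the off-diagonal contribution becomes a two-variable character sum whose modulus grows with the outer variable, and its treatment requires combining quartic reciprocity with averaging over Gaussian integers and a sieve for squarefree moduli. The explicit exponent $x^{1-1/3200}$ is then the expected outcome of balancing the cutoff in the Vaughan decomposition against the P\'olya--Vinogradov saving, with the losses from Cauchy--Schwarz and from the squarefree sieve compounding into a small but explicit polynomial gain.
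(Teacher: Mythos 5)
Your overall plan (encode $e_p$ by a residue symbol and run Vinogradov's method via Type I and Type II sums) is the right family of ideas, but it contains a structural error that either makes the argument collapse or, read literally, makes most of it unnecessary. The governing symbol here is \emph{not} periodic in the prime. The Leonard--Williams criterion says that, for $p\equiv 1\bmod 4$ with $(-q/p)=(-q/p)_4=1$, one has $16\mid h(-qp)$ if and only if $\left(\frac{u}{p}\right)_4=\left(\frac{2}{u}\right)$, where $p=u^2-qv^2$ and $u\equiv 1\bmod 4$. Both entries of the quartic symbol depend on $p$: the numerator $u$ is extracted from the Pell-type representation of $p$ and is not determined by any congruence condition on $p$ (or on a Gaussian prime $\pi$ above it). This is precisely why Chebotarev, equivalently Hecke $L$-functions of fixed conductor, cannot be applied, and it is the entire reason the Friedlander--Iwaniec--Mazur--Rubin ``spin'' machinery is needed. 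Your claim that the PID hypothesis on $\mathbb{Q}(i,\sqrt{q})$ ``forces the symbol to depend only on $\pi$ modulo a fixed Gaussian modulus'' is therefore false; what that hypothesis actually buys is a generator $w$, well defined up to units, of the prime of $M_q=\mathbb{Q}(i,\sqrt{q})$ above $p$, from which one builds a genuinely non-periodic spin $[w]=\bigl(\tfrac{\texttt{u}(w\sigma(w))}{w}\bigr)_{M_q,4}\bigl(\tfrac{2}{\texttt{u}(w\sigma(w))}\bigr)$, where $\texttt{u}(w\sigma(w))=u$; one must still average over the unit orbit of $\varepsilon_q$ to get something depending only on the ideal. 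If your periodicity claim were true, the symbol would be a ray class character, the prime sum would follow from the prime ideal theorem with a much stronger error term, and no Type II estimate would be needed at all --- so your subsequent appeal to a Vaughan decomposition is internally inconsistent with your own setup.

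Two further consequences. First, the correct ambient ring is $\mathcal{O}_{M_q}$ for the quartic field $M_q=\mathbb{Q}(i,\sqrt{q})$, not $\mathbb{Z}[i]$: the representation $p=u^2-qv^2$ lives in $\mathbb{Z}[\sqrt{q}]$ while the quartic symbol needs $i$, so both must be present simultaneously (and note $p=a^2+b^2$, not $-qp=a^2+b^2$, which is negative). Second, the Type II estimate does not come from Cauchy--Schwarz plus P\'olya--Vinogradov for a character of fixed modulus; it rests on a twisted multiplicativity identity of the shape $[wz]=\mu\,[w][z]\left(\frac{z}{\tau(w)}\right)_{M_q,2}\mathds{1}_{\gcd(z,\tau\sigma(w))=(1)}$ obtained from quartic reciprocity, after which the leftover quadratic symbol of \emph{varying} modulus $\tau(w)$ supplies the oscillation needed in the bilinear sums. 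Likewise the Type I sums require lowering the quartic symbol over $M_q$ to a quadratic symbol over $\mathbb{Q}(i)$ whose modulus depends on the summation variable, not a fixed Hecke character, and it is exactly here that extra work is needed to avoid the short character sum conjecture of Friedlander et al. Without these ingredients the exponent $1-\frac{1}{3200}$ (which in the actual argument comes from the admissible range $\theta<1/3136$ in the FIMR sieve) has no derivation.
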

\noindent In the theorem, $\ll$ denotes the Vinogradov symbol for $\mathit{O}(\,\,)$.

We will see in \textsection 3.3, that the numbers $e_p$ are not always zero and so our result shows that the sequence $e_p$ oscillates as $p$ varies. Indeed, if $8$ divides the class number, 16 divides it approximately half of the time.

\begin{cor} 
	\label{corollary}
	Let $q \in \{3,7,11,19,43,67,163\}$. Then the limit 
		\[
		\delta(16):=\lim_{x\to \infty} \frac{\#\{p\leqslant x:\,p\equiv 1\bmod 4, \,\, 16\mid h(-pq)\}}{\#\{p\leqslant x:\,p\equiv 1\bmod 4\}},
		\]
	exists and
	$\delta(16)=\frac{1}{8}.$ 
\end{cor}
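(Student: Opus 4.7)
The plan is to reduce the corollary to two ingredients: the cancellation bound of Theorem~\ref{Result} and a classical density result on $8$-divisibility of $h(-qp)$. Write
\begin{align*}
A(x) &= \#\{p \leqslant x : p \equiv 1 \bmod 4\},\\
B(x) &= \#\{p \leqslant x : p \equiv 1 \bmod 4,\; 8 \mid h(-qp)\},\\
C(x) &= \#\{p \leqslant x : p \equiv 1 \bmod 4,\; 16 \mid h(-qp)\}.
\end{align*}
The starting observation is the algebraic identity
\[
\sum_{\substack{p \leqslant x \\ p \equiv 1 \bmod 4}} e_p \;=\; C(x) - \bigl(B(x) - C(x)\bigr) \;=\; 2C(x) - B(x).
\]
Theorem~\ref{Result} then reads $2C(x) - B(x) \ll x^{1 - 1/3200}$, and since $A(x) \sim x/(2\log x)$ by the prime number theorem in arithmetic progressions, this forces $(2C(x) - B(x))/A(x) \to 0$. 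It therefore suffices to show $B(x)/A(x) \to 1/4$, for then $C(x)/A(x) \to 1/8$.

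For the density-$1/4$ statement I would invoke the classical $2$-part theory. Since $p \equiv 1 \bmod 4$ and $q \equiv 3 \bmod 4$, the discriminant of $K_{p,q}$ is $-qp$ with exactly two prime divisors, so Gauss genus theory (as already recalled in the introduction) implies that the $2$-Sylow subgroup of $\Cl(K_{p,q})$ is cyclic and nontrivial; consequently $2^k \mid h(-qp)$ is equivalent to the $2^k$-rank being equal to $1$. By R\'edei--Reichardt, the condition $4 \mid h(-qp)$ reduces, using $\left(\tfrac{-1}{p}\right) = 1$, to $\left(\tfrac{q}{p}\right) = 1$, which by quadratic reciprocity and Chebotarev is a Frobenius condition in $\mathbb{Q}(\sqrt{q})/\mathbb{Q}$ of density $1/2$. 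Conditional on $4 \mid h(-qp)$, the step $8 \mid h(-qp)$ is in turn controlled by a R\'edei-type symbol whose Frobenius lives in a biquadratic extension built from $\mathbb{Q}(\sqrt{-1},\sqrt{q})$; the hypothesis that this field is a PID, which is precisely the defining property of the set $Q$, supplies equidistribution and hence a further factor of $1/2$. Multiplying the two conditional densities gives $B(x)/A(x) \to 1/4$.

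Combining the two steps yields $\delta(16) = 1/8$, proving the corollary. I expect the main obstacle to sit not in the corollary's own argument, which is a short deduction, but in the supporting density $1/4$ for $8 \mid h(-qp)$: this is a classical $8$-rank density, and it is exactly the reason for restricting $q$ to the set $Q$. All the analytic depth of the paper is already packaged inside Theorem~\ref{Result}; once that oscillation bound is in hand, the corollary follows by comparing it against the smooth $8$-rank density.
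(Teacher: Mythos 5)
Your proposal is correct and is essentially the paper's (implicit) argument: the identity $\sum_{p\leqslant x,\,p\equiv 1\bmod 4}e_p = 2C(x)-B(x)$, combined with Theorem~\ref{Result} and the density $B(x)/A(x)\to 1/4$, immediately gives $\delta(16)=1/8$. The only caveat is in your justification of the input $B(x)/A(x)\to 1/4$: the paper establishes this in \textsection 3.3 via the criterion $8\mid h(-qp)\Leftrightarrow \left(\frac{-q}{p}\right)_4=1$, i.e.\ complete splitting of $p$ in $\mathbb{Q}(\sqrt{-1},\sqrt[4]{-q})$, so the equidistribution comes directly from Chebotarev applied to that degree-$8$ normal extension, while the class-number-one hypotheses on $\mathbb{Q}(\sqrt{-q})$ and $\mathbb{Q}(\sqrt{-1},\sqrt{q})$ serve to make the $8$- and $16$-rank criteria explicit rather than to supply the equidistribution itself.
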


The main tool we use is the generalized version of Vinogradov's method in the setting of number fields, given by Friedlander \emph{et al\@.}~\cite{spin of prime ideals}, similarly as in the works of Koymans and Milovic~\cite{16-rank}~\cite{-p}.
Moreover, as in~\cite{16-rank}, our results are unconditional in contrast to the work of Friedlander \emph{et al\@.}~\cite{spin of prime ideals}, which uses a conjecture on short character sums.\newline
The key ingredient of our argument is a sequence defined in \textsection 3.5, that encodes when 16 divides the class number of $K_{p,q}$. We carry out careful estimation of the so-called sums of type I and sums of type II that are needed to use Vinogradov's method.


\section{Prerequisites}


\subsection{Hilbert symbols and \texorpdfstring{$n$}{n}-th power residue symbol}

Let $K$ be a number field and let $\mathcal{O}_K$ be its ring of integers.
Let $n$ be a natural number and denote by $\mu_n$ the group of $n$-th roots of unity in $\mathbb{C}$. 
Let $K_\mathfrak{p}$ be the completion of $K$ with respect to a finite prime $\mathfrak{p}$ of $K$.
We assume that $K_\mathfrak{p}^\times$ contains a primitive $n$-th root of unity.
Then $L_\mathfrak{p}:=K_\mathfrak{p}(\sqrt[n]{K_\mathfrak{p}^\times})$ is the maximal abelian extension of exponent $n$ of $K_\mathfrak{p}$, by Kummer theory.\newline
We employ the notation of~\cite[Chapter V, \textsection 3]{Neukirch}. The $n$-th Hilbert symbol is the non-degenerate bilinear pairing
	\begin{align*}
	\left(\frac{\,\,\,,\,\,\,}{\mathfrak{p}}\right)_{K,n}:K_{\mathfrak{p}}^\times/ (K_\mathfrak{p}^{\times})^{ n}&\times K_{\mathfrak{p}}^\times/ (K_\mathfrak{p}^{\times})^{ n} \longrightarrow \mu_n\\	(a&,b)\longmapsto\frac{\sigma_a(\sqrt[n]{b})}{\sqrt[n]{b}},
	\end{align*}
where $\sigma_a$ is the corresponding element of $a$ in $\Gal(L_\mathfrak{p}/K_\mathfrak{p})$, given by the isomorphism $K_{\mathfrak{p}}^\times/ (K_\mathfrak{p}^{\times})^{ n}\cong \Gal(L_{\mathfrak{p}}/K_{\mathfrak{p}})$ of class field theory.
We recall basic properties of this symbol, see~\cite[Chapter V, \textsection 3, Proposition 3.2]{Neukirch}.

\begin{prop}
	\label{properties of Hilbert}
	For all $a$, $a'$, $b$, $b'\in K_{\mathfrak{p}}^\times/ (K_\mathfrak{p}			^{\times})^{ n}$, the $n$-th Hilbert symbol has the following properties: 
	\begin{itemize}
		\item[(i)]$\left(\frac{a a',b}{\mathfrak{p}}\right)_{K,n}=\left(\frac{a,b}{\mathfrak{p}}\right)_{K,n}\left(\frac{a',b}{\mathfrak{p}}\right)_{K,n},$
		\item[(ii)]$\left(\frac{a ,bb'}{\mathfrak{p}}\right)_{K,n}= \left(\frac{a,b}{\mathfrak{p}}\right)_{K,n}\left(\frac{a,b'}{\mathfrak{p}}\right)_{K,n},$
		\item[(iii)]$\left(\frac{a,b}{\mathfrak{p}}\right)_{K,n}=1\Leftrightarrow a$ lies in the image of the norm map of the extension $K_{\mathfrak{p}}(\sqrt[n]{b})/K_{\mathfrak{p}},$
		\item[(iv)]$\left(\frac{a,b}{\mathfrak{p}}\right)_{K,n}=\left(\frac{b,a}{\mathfrak{p}}\right)_{K,n}^{-1},$
		\item[(v)]$\left(\frac{a,1-a}{\mathfrak{p}}\right)_{K,n}=1$ and $\left(\frac{a,-a}{\mathfrak{p}}\right)_{K,n}=1,$
		\item[(vi)] if $\left(\frac{a,b}{\mathfrak{p}}\right)_{K,n}=1$ for all $b \in K_{\mathfrak{p}}^\times$, then $a\in K_{\mathfrak{p}}^{\times n}.$
	\end{itemize}
\end{prop}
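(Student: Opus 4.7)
The plan is to treat the six properties as formal consequences of the Kummer-theoretic definition of the pairing together with two inputs from local class field theory: that $a \mapsto \sigma_a$ is a group homomorphism, and that its kernel on any Kummer subextension is the image of the local norm map. Once those two inputs are in place, (i)--(vi) all drop out by formal manipulations, so the genuine content of the proposition is concentrated in (iii).

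First I would establish bilinearity. For (i), the identity $\sigma_{aa'} = \sigma_a \sigma_{a'}$, coming from functoriality of local reciprocity, combined with the observation that $\sigma_{a'}(\sqrt[n]{b})/\sqrt[n]{b} \in \mu_n \subset K_\mathfrak{p}$ is already fixed by $\sigma_a$, yields the required multiplicativity. For (ii), I would pick compatible $n$-th roots so that $\sqrt[n]{bb'} = \sqrt[n]{b}\,\sqrt[n]{b'}$ (any ambiguity is an $n$-th root of unity that cancels after passing to the quotient by $(K_\mathfrak{p}^\times)^n$) and expand directly.

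Next I would prove the class-field-theoretic statements (iii) and (vi). By construction, $\sigma_a(\sqrt[n]{b}) = \sqrt[n]{b}$ is equivalent to $\sigma_a$ restricting trivially to $K_\mathfrak{p}(\sqrt[n]{b})$; translated through the local reciprocity isomorphism, the subgroup of $K_\mathfrak{p}^\times/(K_\mathfrak{p}^\times)^n$ cut out by this condition is precisely the image of $N_{K_\mathfrak{p}(\sqrt[n]{b})/K_\mathfrak{p}}$, which is (iii). Applying the same reasoning simultaneously to all $b$ yields (vi): if $a$ pairs trivially with every $b$, then $\sigma_a$ acts as the identity on $L_\mathfrak{p} = K_\mathfrak{p}(\sqrt[n]{K_\mathfrak{p}^\times})$, so $\sigma_a = 1$ and $a$ is an $n$-th power.

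Finally I would extract (iv) and (v) formally. The factorizations
\[
1-a = \prod_{\zeta \in \mu_n}(1 - \zeta\,\sqrt[n]{a}) = N_{K_\mathfrak{p}(\sqrt[n]{a})/K_\mathfrak{p}}(1 - \sqrt[n]{a}), \qquad -a = N_{K_\mathfrak{p}(\sqrt[n]{a})/K_\mathfrak{p}}(-\sqrt[n]{a}),
\]
combined with (iii), immediately give $(1-a,a)_{K,n} = 1$ and $(-a,a)_{K,n} = 1$. Then skew-symmetry (iv) follows from expanding
\[
1 = \left(\frac{-ab,\,ab}{\mathfrak{p}}\right)_{K,n} = \left(\frac{-a,\,a}{\mathfrak{p}}\right)_{K,n}\left(\frac{-b,\,b}{\mathfrak{p}}\right)_{K,n}\left(\frac{a,\,b}{\mathfrak{p}}\right)_{K,n}\left(\frac{b,\,a}{\mathfrak{p}}\right)_{K,n},
\]
where the four-term collapse is obtained by full bilinearity (i), (ii) applied to both entries. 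The statement of (v) with the order of entries exactly as in the proposition is then recovered by applying (iv) to $(1-a,a)_{K,n} = 1$ and $(-a,a)_{K,n} = 1$. The main obstacle is (iii): it repackages the nontrivial existence theorem of local class field theory, and this is the reason the paper can simply defer the entire package to Neukirch's monograph.
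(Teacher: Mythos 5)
The paper offers no proof of this proposition at all --- it simply recalls it from Neukirch, Chapter V, \S 3, Proposition 3.2 --- and your sketch is a correct reconstruction of the standard argument given there: bilinearity from the homomorphism property of $a\mapsto\sigma_a$ together with $\mu_n\subset K_\mathfrak{p}$, properties (iii) and (vi) from the identification of the kernel of the reciprocity map on a Kummer subextension with the norm group, and (iv), (v) by the usual formal manipulation with $X^n-a$ evaluated at $X=1$ and $X=0$. The one imprecision is that the displayed identities $1-a=N_{K_\mathfrak{p}(\sqrt[n]{a})/K_\mathfrak{p}}(1-\sqrt[n]{a})$ and $-a=N_{K_\mathfrak{p}(\sqrt[n]{a})/K_\mathfrak{p}}(-\sqrt[n]{a})$ are literally correct only when $[K_\mathfrak{p}(\sqrt[n]{a}):K_\mathfrak{p}]=n$; in the degenerate case one groups the factors $1-\zeta\sqrt[n]{a}$ into Galois orbits to see that $1-a$ (resp.\ $-a$) is a power of a norm, hence still a norm, which is all that (iii) requires.
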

Let $\mathfrak{p}$ be a finite prime of $K$ that does not divide $n$ and let $a$ be an invertible element of the valuation ring of $K_\mathfrak{p}$. Denote by $N$ the norm of the prime ideal $\mathfrak{p}$ i.e. $N:=\mathrm{N}_{K/\mathbb{Q}}(\mathfrak{p})$.
The $n$-th power residue symbol $\left(\frac{a}{\mathfrak{p}}\right)_{K,n}\in \mu_n$, is defined by the congruence
\begin{equation}
	\label{n-th power residue symbol}
	\left(\frac{a}{\mathfrak{p}}\right)_{K,n}\equiv a^{\frac{{N}-1}{n}}\bmod \mathfrak{p}.
\end{equation}
For every odd ideal $\mathfrak{b}$ of $\mathcal{O}_K$ (i.e. coprime to 2) that is coprime to $n$, and every element $a\in\mathcal{O}_K$ coprime to $\mathfrak{b}$, i.e. $\gcd((a), \mathfrak{b})=(1)$, we define the $n$-th power residue symbol by
\begin{equation}
	\label{def n-th residue 1}
	\left(\frac{a}{\mathfrak{b}}\right)_{K,n}:=\prod_{\mathfrak{p} | \mathfrak{b}} \left(\frac{a}{\mathfrak{p}}\right)_{K,n}^{\ord_\mathfrak{p} (\mathfrak{b})}
\end{equation}
and we set $\left(\frac{a}{\mathfrak{b}}\right)_{K,n}=0$ if $a$ is not coprime to $\mathfrak{b}.$

\noindent For $b\in \mathcal{O}_K$, we define
\begin{equation}
	\label{def n-th residue 2}
	\left(\frac{a}{b}\right)_{K,n}:=\left(\frac{a}{b\,\mathcal{O}_K}
	\right)_{K,n}.
\end{equation}
For $K=\mathbb{Q}$ we omit the subscript $K$.


\subsection{Quartic reciprocity}

The quadratic and the quartic residue symbols will be the ones that we will use the most. Since we will work in the field $M_q:=\mathbb{Q}(\sqrt{-1},\sqrt{q}),$ for $q\in Q$ with $Q=\{3,7,11,19,43,67,163\}$, we will state a weak version of the quartic reciprocity law in this setting. 

\begin{lem}
	\label{reciprocità quartica}
	Let $a, b \in \mathcal{O}_{M_q}$ with $b $ odd. If we fix $a$, then $
	\left(\frac{a}{b}\right)_{M_q,4}$ depends only on the congruence class of 
	$b$ modulo $32a\mathcal{O}_{M_q}.$
	Moreover, if $a$ is odd, then
 	\[
 	\left(\frac{a}{b}\right)_{M_q,4}=\mu\cdot\left(\frac{b}{a}\right)_{M_q,4},
 	\]
	where $\mu \in \{\pm 1 ,\pm i\}$ depends only on the congruence classes of 
	$a$ and $b $ modulo $32\mathcal{O}_{M_q}.$
\end{lem}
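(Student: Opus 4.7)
The plan is to reduce both assertions to the local behavior of the $4$th Hilbert symbol at primes of $M_q$ above $2$, exploiting the global product formula in the totally imaginary field $M_q$.

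For the reciprocity statement, first assume $a$ and $b$ are coprime and both odd. For a prime $\mathfrak{p}\nmid 2$ at which both $a$ and $b$ are units, the Hilbert symbol $\bigl(\frac{a,b}{\mathfrak{p}}\bigr)_{M_q,4}$ is trivial. For $\mathfrak{p}\mid b$ with $\mathfrak{p}\nmid 2a$, the tame-symbol formula gives $\bigl(\frac{a,b}{\mathfrak{p}}\bigr)_{M_q,4}=\bigl(\frac{a}{\mathfrak{p}}\bigr)_{M_q,4}^{\ord_\mathfrak{p}(b)}$, and symmetrically at primes $\mathfrak{p}\mid a$ with $\mathfrak{p}\nmid 2b$ it equals $\bigl(\frac{b}{\mathfrak{p}}\bigr)_{M_q,4}^{-\ord_\mathfrak{p}(a)}$. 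Multiplying all the contributions and invoking the global product formula $\prod_\mathfrak{p}\bigl(\frac{a,b}{\mathfrak{p}}\bigr)_{M_q,4}=1$ (with no archimedean correction, since $M_q$ is totally imaginary) one obtains
\[
\left(\frac{a}{b}\right)_{M_q,4}\left(\frac{b}{a}\right)_{M_q,4}^{-1}=\prod_{\mathfrak{p}\mid 2}\left(\frac{a,b}{\mathfrak{p}}\right)_{M_q,4}^{-1}=:\mu\in\mu_4=\{\pm 1,\pm i\}.
\]

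The next step is to show that each factor $\bigl(\frac{a,b}{\mathfrak{p}}\bigr)_{M_q,4}$ with $\mathfrak{p}\mid 2$ depends on $a$ and $b$ only modulo $32\mathcal{O}_{M_q,\mathfrak{p}}$. Since the symbol factors through $M_{q,\mathfrak{p}}^\times/(M_{q,\mathfrak{p}}^\times)^4$, it suffices to check that $1+32\mathcal{O}_{M_q,\mathfrak{p}}\subseteq(\mathcal{O}_{M_q,\mathfrak{p}}^\times)^4$. The ramification index $e=e(\mathfrak{p}\mid 2)$ divides $[M_q:\mathbb{Q}]=4$, and a standard binomial or $p$-adic logarithm argument shows that $1+\mathfrak{p}^N\subseteq(\mathcal{O}_{M_q,\mathfrak{p}}^\times)^4$ whenever $N\geq 2e+1=\ord_\mathfrak{p}(4)+1$; since $\ord_\mathfrak{p}(32)=5e\geq 2e+1$, the inclusion holds. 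Hence $\mu$ depends on $a,b$ only modulo $32\mathcal{O}_{M_q}$, which completes the reciprocity statement.

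For the first assertion I would not assume $a$ is odd: writing $a=2^ka'$ with $a'$ odd, multiplicativity in the top entry gives $\bigl(\frac{a}{b}\bigr)_{M_q,4}=\bigl(\frac{2}{b}\bigr)_{M_q,4}^k\bigl(\frac{a'}{b}\bigr)_{M_q,4}$. The second factor, by the reciprocity just established, equals $\mu(a',b)\bigl(\frac{b}{a'}\bigr)_{M_q,4}$; as a function of $b$, $\bigl(\frac{b}{a'}\bigr)$ is constant on residue classes modulo $a'$ by~\eqref{def n-th residue 1}, while $\mu(a',b)$ is constant modulo $32$ by the previous step. The remaining factor $\bigl(\frac{2}{b}\bigr)_{M_q,4}$ is handled by exactly the same product-formula argument applied to the pair $(2,b)$: it depends on $b$ only modulo $32\mathcal{O}_{M_q}$. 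Multiplying, the full symbol depends on $b$ only modulo $32a\mathcal{O}_{M_q}$. The main obstacle I anticipate is the explicit local verification $1+32\mathcal{O}_{M_q,\mathfrak{p}}\subseteq(\mathcal{O}_{M_q,\mathfrak{p}}^\times)^4$ for each $\mathfrak{p}\mid 2$: the decomposition of $2$ in $M_q$ depends on the class of $q$ modulo $8$ (giving separate cases according as $\mathfrak{p}$ is ramified, split, or inert over $\mathbb{Q}_2(i)$), but the exponent $5$ in the modulus $32$ is comfortably larger than the minimal bound $2e+1\leq 9$, so a single uniform bound covers every case.
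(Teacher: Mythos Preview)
Your approach is essentially the paper's: reduce via the global product formula (no archimedean contribution since $M_q$ is totally imaginary) to Hilbert symbols at primes above $2$, and then show that $1+32\mathcal{O}_{M_q,\mathfrak p}\subset(\mathcal{O}_{M_q,\mathfrak p}^\times)^4$. Two small slips to fix. First, the exponent $2e+1$ is too optimistic---already in $\mathbb{Q}_2$ one has $9\in 1+8\mathbb{Z}_2$ but $9\notin(\mathbb{Z}_2^\times)^4$; the Hensel condition for $Y^4-x$ needs $\ord_{\mathfrak p}(x-1)>2\ord_{\mathfrak p}(4)=4e$, which is still fine here since $e(\mathfrak p\mid 2)=2$ for every $q\in Q$ and $\ord_{\mathfrak p}(32)=10>8$. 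Second, the decomposition $a=2^k a'$ with $a'$ odd need not exist in $\mathcal{O}_{M_q}$ (think $a=1+i$); the paper sidesteps this by applying the product formula directly to the pair $(a,b)$, obtaining $\bigl(\frac{a}{b}\bigr)_{M_q,4}=\prod_{\mathfrak p\in S(a)}\bigl(\frac{a,b}{\mathfrak p}\bigr)_{M_q,4}$ with $S(a)=\{\mathfrak p:\mathfrak p\mid 2\ \text{or}\ \ord_{\mathfrak p}(a)\neq 0\}$, so the odd primes dividing $a$ contribute $\bigl(\frac{b}{\mathfrak p}\bigr)_{M_q,4}^{\ord_{\mathfrak p}(a)}$, which depends only on $b\bmod a$, and the primes above $2$ depend only on $b\bmod 32$ as you argued.
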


\begin{proof}
First, let us focus on the second part of the lemma and fix $a\in\mathcal{O}_{M_q}$.
If $a$ and $b$ are not coprime to each other, then on both sides of the identity we have 0. Now, suppose that they are coprime to each other and that $q\neq 7$. Using~\cite[Chapter VI, $\S 8$, Theorem 8.3]{Neukirch}, we get
	\[
	\left(\frac{a}{b}\right)_{M_q,4}=\left(\frac{b}{a}\right)_{M_q,4}\cdot \left(\frac{a,b}{\mathfrak{I}}\right)_{M_q,4}.
	\]
where $\mathfrak{I}$ denotes the ideal $(1+i)$ of $M_q$. Note that the infinite places do not contribute in this product, since the field $M_q$ is totally complex.

We prove that $\left(\frac{a,b}{\mathfrak{I}}\right)_{M_q,4}$ depends only on $a$ and $b$ modulo 32.
If $a \equiv 1\bmod 32,$ where $a\in \mathcal{O}_{(M_q)_{\mathfrak{I}}},$ then $a$ is a fourth power in $(M_q)_{\mathfrak{I}}$ by Hensel's lemma. So we deduce that $\left(\frac{a,b}{\mathfrak{I}}\right)_{M_q,4}=1$ applying property $(iii)$ of Proposition~\ref{properties of Hilbert}. If $b$ is congruent to 1 modulo 32, then we get the same result using properties $(iii)$ and $(iv)$ of Proposition~\ref{properties of Hilbert}.
If neither $a$ nor $b$ is congruent to 1 modulo 32, let $a'$ and $b'$ be different from $a$ and $b$ respectively and such that $a\equiv a'\bmod 32$ and $b \equiv b' \bmod 32.$ Then $a=\gamma a'$ and $b =\tilde{\gamma}b'$ with $\gamma, \tilde{\gamma}$ congruent to 1 modulo 32. Using properties $(i)$ and $(ii)$ of Proposition~\ref{properties of Hilbert}, we get 
\begin{align*}
	\left(\frac{a,b}{\mathfrak{I}}\right)_{M_q,4}&=\left(\frac{\gamma a', \tilde{\gamma}b'}{\mathfrak{I}}\right)_{M_q,4}\\&=\left(\frac{\gamma,\tilde{\gamma}b'}{\mathfrak{I}}\right)_{M_q,4}\left(\frac{a',\tilde{\gamma}}{\mathfrak{I}}\right)_{M_q,4}\left(\frac{a',b'}{\mathfrak{I}}\right)_{M_q,4}\\&=\left(\frac{a',b'}{\mathfrak{I}}\right)_{M_q,4}.
\end{align*}
In the case of $q=7$, we have two different prime ideals $\mathfrak{I}_1$ and $\mathfrak{I}_2$, in $M_7$ above 2. So we have
	\[
	\left(\frac{a}{b}\right)_{M_7,4}=\left(\frac{b}{a}\right)_{M_7,4} \left(\frac{a,b}{\mathfrak{I}_1}\right)_{M_7,4}\left(\frac{a,b}{\mathfrak{I}_2}\right)_{M_7,4}.
	\]
Nonetheless, we can use the very same argument we used before, taking into account that we have two different prime ideals above 2 instead of just one.

Now, let us prove that $(a/b)_{M_q,4}$ depends only on the congruence class of $b$ modulo $32a\mathcal{O}_{M_q}.$ Using~\cite[Chapter VI, $\S 8$, Theorem 8.3]{Neukirch}, we obtain
	\[
	\left(\frac{a}{b}\right)_{M_q,4}=\prod_{\mathfrak{p}\not \in S(a)} \left(\frac{b, a}{\mathfrak{p}}\right)_{M_q,4}=\prod_{\mathfrak{p}\in S(a)} \left(\frac{ a, b}{\mathfrak{p}}\right)_{M_q,4},
	\]
where $S(a):=\{\mathfrak{p}:\mathfrak{p}\mid n\cdot \infty \text{ or }\ord_\mathfrak{p}(a)\neq 0\}.$

As for the prime ideal $\mathfrak{I}$, we already saw that $\left(\frac{a, b}{\mathfrak{I}}\right)_{M_q, 4}$ depends only on $b$ modulo 32 (and the same holds for $\mathfrak{I}_1$ and $\mathfrak{I}_2$ in the case of $q=7$). If $\mathfrak{p}\in S(a)$ is odd, we have 
	\[
	\left(\frac{ a, b}{\mathfrak{p}}\right)_{M_q,4}=\left(\frac{b}{\mathfrak{p}}\right)_{M_q,4}^{\ord_\mathfrak{p}(a)}.
	\]
Hence the value of these symbols depends only on $b$ modulo $a$. Therefore the total symbol depends only on $b$ modulo $32a$.
\end{proof}


\subsection{Field lowering}

For the reader's convenience, we state three lemmas that we will use in the proof of Theorem~\ref{Result}, reducing the quartic residue symbol in a quartic number field to a quadratic residue symbol in a quadratic number field.
These lemmas are stated and proved in~\cite[\textsection 3.2]{16-rank}.
\begin{lem}
\label{p splits}
Let $K$ be a number field and let $\mathfrak{p}$ be an odd prime ideal of $\mathcal{O}_K$. Suppose that $L$ is a quadratic extension of $K$ such that $L$ contains $\mathbb{Q}(\sqrt{-1})$ and $\mathfrak{p}$ splits in $L$. 
Denote by $\psi$ the non-trivial element in $\Gal(L/K)$. Then if $\psi$ fixes $\mathbb{Q}(\sqrt{-1})$, we have for all $\alpha\in \mathcal{O}_K$
	\[
	\left(\frac{\alpha}{\mathfrak{p}\mathcal{O}_L}\right)_{L,4}=\left(\frac{\alpha}{\mathfrak{p}\mathcal{O}_K}\right)_{K,2}
	\]
and if $\psi$ does not fix $\mathbb{Q}(\sqrt{-1})$ we have for all $\alpha\in \mathcal{O}_K$ with $\mathfrak{p}\nmid \alpha$
	\[
	\left(\frac{\alpha}{\mathfrak{p}\mathcal{O}_L}\right)_{L,4}=1.
	\]
\end{lem}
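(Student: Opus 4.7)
The plan is to exploit the factorization $\mathfrak{p}\mathcal{O}_L=\mathfrak{P}_1\mathfrak{P}_2$ with $\psi(\mathfrak{P}_1)=\mathfrak{P}_2$, and to combine the defining congruence of the quartic residue symbol with the Galois action of $\psi$ on $\mu_4$. By the multiplicativity in~\eqref{def n-th residue 1}, we have
\[
\left(\frac{\alpha}{\mathfrak{p}\mathcal{O}_L}\right)_{L,4}=\left(\frac{\alpha}{\mathfrak{P}_1}\right)_{L,4}\cdot\left(\frac{\alpha}{\mathfrak{P}_2}\right)_{L,4},
\]
so everything reduces to comparing the two local symbols at $\mathfrak{P}_1$ and $\mathfrak{P}_2$. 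Since $\mathfrak{p}$ splits in $L$, the residue fields $\mathcal{O}_L/\mathfrak{P}_i$ coincide with $\mathcal{O}_K/\mathfrak{p}$, and since $i\in L\subset L_{\mathfrak{P}_i}$, Hensel's lemma (together with $\mathfrak{p}$ odd) shows that $\mu_4$ injects into $(\mathcal{O}_K/\mathfrak{p})^\times$, so in particular $4\mid N\mathfrak{p}-1$ and distinct fourth roots of unity have distinct residues modulo $\mathfrak{p}$.

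The key observation is the following transport via $\psi$: if $\left(\frac{\alpha}{\mathfrak{P}_1}\right)_{L,4}=\zeta_1\in\mu_4$, i.e.\ $\alpha^{(N\mathfrak{p}-1)/4}\equiv \zeta_1\bmod \mathfrak{P}_1$, then applying $\psi$ (which fixes $\alpha\in\mathcal{O}_K$ and sends $\mathfrak{P}_1$ to $\mathfrak{P}_2$) yields $\alpha^{(N\mathfrak{p}-1)/4}\equiv \psi(\zeta_1)\bmod \mathfrak{P}_2$. Hence
\[
\left(\frac{\alpha}{\mathfrak{P}_2}\right)_{L,4}=\psi(\zeta_1).
\]
Everything now depends on how $\psi$ acts on $\mu_4$.

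In the first case, $\psi$ fixes $\mathbb{Q}(\sqrt{-1})$, so $\psi$ acts trivially on $\mu_4$ and thus $\zeta_2=\zeta_1=:\zeta$. The product becomes $\zeta^2$. On the other hand, by Euler's criterion,
\[
\zeta^2\equiv \alpha^{(N\mathfrak{p}-1)/2}\equiv \left(\frac{\alpha}{\mathfrak{p}\mathcal{O}_K}\right)_{K,2}\pmod{\mathfrak{p}},
\]
and since both sides lie in $\{\pm 1\}\subset\mu_4$ and are determined modulo the odd prime $\mathfrak{p}$, they are equal. In the second case, $\psi$ does not fix $\mathbb{Q}(\sqrt{-1})$, so $\psi(i)=-i$; hence $\psi$ acts on $\mu_4$ by inversion, giving $\zeta_2=\zeta_1^{-1}$ and therefore $\zeta_1\zeta_2=1$, as required.

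There is essentially no obstacle in this argument; the only point requiring care is the well-definedness of the fourth-power symbols, which rests on checking that $\mu_4\subset L_{\mathfrak{P}_i}$ and that $\mathfrak{p}$ being odd ensures that distinct elements of $\mu_4$ are distinguished modulo $\mathfrak{P}_i$. Everything else is a direct application of the definition~\eqref{n-th power residue symbol} combined with the Galois equivariance of the residue symbol under $\psi$.
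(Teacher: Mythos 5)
Your argument is correct and is essentially the standard proof: split $\mathfrak{p}\mathcal{O}_L=\mathfrak{P}_1\mathfrak{P}_2$, use Galois equivariance of the congruence defining the symbol to get $\bigl(\tfrac{\alpha}{\mathfrak{P}_2}\bigr)_{L,4}=\psi\bigl(\bigl(\tfrac{\alpha}{\mathfrak{P}_1}\bigr)_{L,4}\bigr)$, and distinguish the two cases by how $\psi$ acts on $\mu_4$. The paper itself gives no proof but defers to~\cite[\S 3.2]{16-rank}, where the argument is the same; the only point worth making explicit is that in the first case the asserted identity for $\mathfrak{p}\mid\alpha$ holds trivially since both sides vanish.
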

\begin{lem}
\label{p inerts}
Let $K$ be a number field and let $\mathfrak{p}$ be an odd prime ideal of $\mathcal{O}_K$ of degree 1 lying above $p.$ Suppose that $L$ is a quadratic extension of $K$ such that $L$ contains $i$ and $\mathfrak{p}$ stays inert in $L$. We have for all $\alpha\in \mathcal{O}_K$
	\[
	\left(\frac{\alpha}{\mathfrak{p}\mathcal{O}_L}\right)_{L,4}=\left(\frac{\alpha}{\mathfrak{p}\mathcal{O}_K}\right)_{K,2}^{\frac{p+1}{2}}.
	\]
\end{lem}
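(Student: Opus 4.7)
The plan is to reduce both sides to exponentiations in the residue field and compare. Since $\mathfrak{p}$ has residue degree $1$ over $p$, we have $\mathcal{O}_K/\mathfrak{p} \cong \mathbb{F}_p$; and since $\mathfrak{p}$ is inert in the quadratic extension $L/K$, the ideal $\mathfrak{P} := \mathfrak{p}\mathcal{O}_L$ is prime with $\mathcal{O}_L/\mathfrak{P} \cong \mathbb{F}_{p^2}$, so $\mathrm{N}_{L/\mathbb{Q}}(\mathfrak{P}) = p^2$. As $L$ contains $i$, it contains $\mu_4$, and the $4$-th power residue symbol at $\mathfrak{P}$ is well defined.

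The case $\mathfrak{p}\mid\alpha$ is immediate: both sides vanish by the convention in~\eqref{def n-th residue 1}. Assume henceforth $\mathfrak{p}\nmid\alpha$. Using the defining congruence~\eqref{n-th power residue symbol} and the factorization $(p^2-1)/4 = \frac{p-1}{2}\cdot\frac{p+1}{2}$, which is an integer because $p$ is odd, I would compute
\[
\left(\frac{\alpha}{\mathfrak{P}}\right)_{L,4} \equiv \alpha^{(p^2-1)/4} \equiv \bigl(\alpha^{(p-1)/2}\bigr)^{(p+1)/2} \pmod{\mathfrak{P}}.
\]
Setting $s := \left(\frac{\alpha}{\mathfrak{p}}\right)_{K,2}\in\{\pm 1\}$, the defining congruence of the quadratic residue symbol gives $\alpha^{(p-1)/2}\equiv s \pmod{\mathfrak{p}}$, hence also modulo $\mathfrak{P}$ since $\mathfrak{p}\subset\mathfrak{P}$. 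Substituting yields $\left(\frac{\alpha}{\mathfrak{P}}\right)_{L,4} \equiv s^{(p+1)/2} \pmod{\mathfrak{P}}$.

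To finish, it remains to promote this congruence to an equality in $\mu_4$. The differences between any two distinct fourth roots of unity lie among $\pm 2,\pm 2i,\pm(1\pm i)$, all of which divide $2$ in $\mathbb{Z}[i]\subseteq\mathcal{O}_L$; since $\mathfrak{P}$ lies above the odd rational prime $p$, none of these differences is in $\mathfrak{P}$. Therefore the reduction map $\mu_4\to\mathcal{O}_L/\mathfrak{P}$ is injective, and the congruence above is in fact an equality in $\mu_4$. This yields the desired identity. The main care point is precisely this last injectivity, which legitimates treating both sides as equal elements of $\mu_4$ rather than mere residues; everything else is bookkeeping on the integer $(p^2-1)/4$.
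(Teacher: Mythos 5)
Your proof is correct and is essentially the standard argument: the paper itself does not prove this lemma but cites \cite[\S 3.2]{16-rank}, where the same reduction to the residue field $\mathbb{F}_{p^2}$ via the defining congruence~\eqref{n-th power residue symbol} and the factorization $(p^2-1)/4=\frac{p-1}{2}\cdot\frac{p+1}{2}$ is carried out. Your explicit justification that $\mu_4$ injects into $\mathcal{O}_L/\mathfrak{p}\mathcal{O}_L$ for odd $\mathfrak{p}$ is exactly the right care point for upgrading the congruence to an equality.
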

\begin{lem}
\label{p not ramifies}
Let $K$ be a number field and let $L$ be a quadratic extension of $K$. Denote by $\psi$ the non-trivial element in $\Gal(L/K)$. Suppose that $\mathfrak{p}$ is a prime ideal of $\mathcal{O}_K$ that does not ramify in $L$ and further suppose that $\beta\in \mathcal{O}_L$ satisfies $\beta \equiv \psi(\beta)\bmod \mathfrak{p}\mathcal{O}_L$. Then there is $\beta'\in \mathcal{O}_K$ such that $\beta'\equiv \beta \bmod \mathfrak{p}\mathcal{O}_L.$
\end{lem}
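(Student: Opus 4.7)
The plan is to analyze the two possible splitting behaviors of the unramified prime $\mathfrak{p}$ in $L$ and, in each case, to reduce the statement to a simple assertion about residue fields. So first I would observe that, since $\mathfrak{p}$ is unramified in the quadratic extension $L/K$, either $\mathfrak{p}$ splits as $\mathfrak{p}\mathcal{O}_L=\mathfrak{P}_1\mathfrak{P}_2$ with $\psi(\mathfrak{P}_1)=\mathfrak{P}_2$, or $\mathfrak{p}\mathcal{O}_L=\mathfrak{P}$ remains prime with residue degree $2$.

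In the split case, I would invoke the Chinese Remainder Theorem to identify
\[
\mathcal{O}_L/\mathfrak{p}\mathcal{O}_L \;\xrightarrow{\sim}\; \mathcal{O}_L/\mathfrak{P}_1 \times \mathcal{O}_L/\mathfrak{P}_2 \;\cong\; \mathcal{O}_K/\mathfrak{p} \times \mathcal{O}_K/\mathfrak{p},
\]
where the last isomorphism comes from the fact that each $\mathfrak{P}_i$ has residue degree $1$ over $\mathfrak{p}$. Under this identification, $\psi$ swaps the two factors, so the hypothesis $\beta \equiv \psi(\beta)\bmod \mathfrak{p}\mathcal{O}_L$ forces the two components of the image of $\beta$ to agree, say both equal to $\bar c \in \mathcal{O}_K/\mathfrak{p}$. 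Any lift $\beta' \in \mathcal{O}_K$ of $\bar c$ then satisfies $\beta' \equiv \beta \bmod \mathfrak{p}\mathcal{O}_L$.

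In the inert case, the residue field extension $(\mathcal{O}_L/\mathfrak{P})/(\mathcal{O}_K/\mathfrak{p})$ is of degree $2$ and Galois; since $\mathfrak{p}$ is unramified the decomposition group at $\mathfrak{P}$ is all of $\Gal(L/K)$ with trivial inertia, so $\psi$ induces the Frobenius generator on the residue field. The hypothesis then says that the image $\bar\beta \in \mathcal{O}_L/\mathfrak{P}$ is fixed by Frobenius, hence lies in the subfield $\mathcal{O}_K/\mathfrak{p}$. Lifting any representative yields the desired $\beta' \in \mathcal{O}_K$.

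There is no real obstacle here; the only point that needs care is the verification that $\psi$ really induces the Frobenius in the inert case (equivalently, that the surjection from the decomposition group to the Galois group of the residue extension is an isomorphism), which is exactly the standard fact for unramified primes. So the whole argument is a two-line case split once the correct identifications of residue rings are written down.
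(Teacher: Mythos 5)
Your proof is correct and is essentially the standard argument: the paper itself does not reprove this lemma but defers to Koymans--Milovic \cite[\S 3.2]{16-rank}, where the proof proceeds by exactly your case split (split versus inert) and the same residue-field identifications, with $\psi$ acting as the coordinate swap in the split case and as Frobenius in the inert case. Nothing further is needed.
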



\section{The 2-part of the class group}

Let $k\geqslant1$ be an integer. The $2^k$-rank of a finite abelian group $G$, denoted by rk$_{2^k}G$, is the dimension of the $\mathbb{F}_2$-vector space $2^{k-1}G/2^kG.$
If the 2-Sylow subgroup of $G$ is cyclic, we have $\rk_{2^k}G \in \{0,1\}$ and $\rk_{2^k}G=1$ if and only if $2^k|\#G$.
We will study the necessary and sufficient conditions such that $2^k\mid h(-qp)$ for $k\in\{1,2,3,4\}.$
Moreover, for each integer $k\geqslant 1$ and fixed $q\in Q$ with $Q=\{3,7,11,19,43,67,163\}$, we define a density $\delta(2^k)$ as 
	\[
	\delta(2^k):=\lim_{x\to \infty} \frac{\#\{p\leqslant x:\,p\equiv 1\bmod 4,\,\, 2^k\mid\#\Cl(K_{p,q})\}}{\#\{p\leqslant x:\,p\equiv 1\bmod 4\}},
	\]
if the limit exists.


\subsection{The 2-rank}

The discriminant $D_{K_{p,q}}$ of the extension $K_{p,q}=\mathbb{Q}(\sqrt{-qp})$ is equal to $-qp$, where $q\in Q$ and $p$ a prime congruent to 1 modulo 4.
Then, by Gauss genus theory, we have that $|\Cl(K_{p,q})[2]|=2$ and so $\delta(2)=1$. In particular, it follows that the 2-Sylow subgroup $\Cl(K_{p,q})[2^\infty]$ of the class group is cyclic, as it is an abelian 2-group with just one non-trivial element of order 2.
We describe it as 
	\[
	\Cl(K_{p,q})[2]=\langle[ \mathfrak{t}],[\mathfrak{p}]\rangle,
	\]
where $\mathfrak{t}$ is the prime ideal above $q$ and $\mathfrak{p}$ is the prime ideal above $p$ in $K_{p,q}$.


\subsection{The 4-rank}

For the 4-rank of the class group of $K_{p,q}$, we look for an element of order 4.
We have that $\rk_4\Cl(K_{p,q})=1$ if and only if the map
	\[
	\varphi:\Cl(K_{p,q})[2]\longrightarrow \Cl(K_{p,q})/2\Cl(K_{p,q})
	\]
is the zero map.
By class field theory, the genus field $H_2$ is the field $K_{p,q}(\sqrt{-q})$ and we have
	\[
	\Cl(K_{p,q})/2\Cl(K_{p,q})\cong \Gal(H_2/K_{p,q}).
	\]
So the map $\varphi$ is trivial if and only if the Artin symbol corresponding to $\mathfrak{p}$, the prime ideal above $p$, (or analogously the one corresponding to $\mathfrak{t}$) is trivial. It is equivalent to say that $\mathfrak{p}$ (or analogously $\mathfrak{t}$) splits completely in $K_{p,q}\subset H_2$. This is the same as asking that $p$ splits completely in $\mathbb{Q}(\sqrt{-q})$ (or analogously that $q$ splits completely in $\mathbb{Q}(\sqrt{p})$).
Then we have 
	\[
	4\mid h(-qp) \Leftrightarrow \left(\frac{-q}{p}\right)=1\Leftrightarrow \left(\frac{p}{q}\right)=1.
	\]
So, the 4-rank is 1 if and only if $p$ splits completely in $\mathbb{Q}(\sqrt{-1}, \sqrt{-q})$.
Using the Chebotarev Density Theorem, we obtain $\delta(4)=\frac{1}{2}$.


\subsection{The 8-rank}

We have an element of order 8 in the class group if and only if the map
	\[
	\psi:\Cl(K_{p,q})[2]\longrightarrow \Cl(K_{p,q})/4\Cl(K_{p,q})
	\]
is the zero map. Again, by class field theory, we have an extension $H_4$ of $K_{p,q}$, called the 4-Hilbert class field, that is contained in the Hilbert class field $H(K_{p,q})$. The field $H_4$ is such that $\Gal(H_4/K_{p,q})\cong \Cl(K_{p,q})/4\Cl(K_{p,q}).$ The map $\psi$ is trivial if and only if the Artin symbol of $\mathfrak{p}$ (resp. of $\mathfrak{t}$) of the extension $K_{p,q}\subset H_4$ is trivial that corresponds to ask that $\mathfrak{p}$ (resp. $\mathfrak{t}$) splits completely in $H_4$. We choose to work with the prime $q$, but it is symmetric to the prime $p$.

Since $q$ ramifies in $\mathbb{Q}(\sqrt{-q})$, it is equivalent to ask that $(\sqrt{-q})$ splits completely in the extension $\mathbb{Q}(\sqrt{-q})\subset H_4$. Let us call $F:=\mathbb{Q}(\sqrt{-q}).$ We have
\begin{displaymath}
\xymatrix{& H_4 \ar@{-}[d] \\ 
& H_2= \mathbb{Q}(\sqrt{-q},\sqrt{p})
\ar@{-}[rd] \ar@{-}[d] \ar@{-}[ld]\\
F=\mathbb{Q}(\sqrt{-q}) \ar@{-}[rd] & 
K_{p,q}=\mathbb{Q}(\sqrt{-qp})\ar@{-}[d] & \mathbb{Q}(\sqrt{p})\ar@{-}[ld] \\
& \mathbb{Q}}
\end{displaymath}
The extension $F\subset H_4$ is abelian of order 4 and exponent 2. The only primes that ramify are the ones over $p$, say $\mathfrak{p}_1$ and $\mathfrak{p}_2$, they are tamely ramified of ramification index 2. The conductor of this extension is $p$. Let $\Cl_p(F)$ be the ray class group with respect to the conductor $p$. Recall that we have an exact sequence of finite abelian groups 
	\[
	0 \longrightarrow \left(\mathcal{O}_F/p\,\mathcal{O}_F\right)^\times/\text{Im} (\mathcal{O}_F^\times)\longrightarrow \Cl_p(F)\longrightarrow \Cl(F)\longrightarrow 0
	\]
and, since $\Cl(F)=1$, we have the following isomorphism
	\[
	\left(\mathcal{O}_F/p\,\mathcal{O}_F\right)^\times/\text{Im} (\mathcal{O}_F^\times)\cong\Cl_p(F).
	\]
Note that $\left(\mathcal{O}_F/p\,\mathcal{O}_F\right)^\times\cong \left(\mathcal{O}_F/\mathfrak{p}_1\,\mathcal{O}_F\right)^\times\times\left(\mathcal{O}_F/\mathfrak{p}_2\,\mathcal{O}_F\right)^\times $ and for $i\in \{1,2\}$, each factor
$\left(\mathcal{O}_F/\mathfrak{p}_i\,\mathcal{O}_F\right)^\times$ is isomorphic to $\mathbb{F}_p^\times$.
The Artin map ensures us the existence of a surjection
	\[
	\Cl_p(F)\twoheadrightarrow\Gal(H_4/F)\cong C_2\times C_2,
	\]
that sends a prime ideal of $\Cl_p(F)$ onto its Artin symbol. 
Then, if we quotient by $2\Cl_p(F)$, we get the following isomorphisms
\begin{equation}
	\label{clp}
	(\mathbb{F}_p^\times\times\mathbb{F}_p^\times)/\square\cong \Cl_p(F)/2\Cl_p(F)\cong \Gal(H_4/F)\cong C_2\times C_2.
\end{equation}

In order to have that $(\sqrt{-q})$ splits completely in $H_4$, we want that its Artin symbol is trivial. Hence, considering~\eqref{clp}, we need that $\sqrt{-q}$ is a square modulo $p$. Therefore, if $p$ is a prime congruent to 1 modulo 4 and such that $(-q/p)=1$, we have the following condition
\begin{equation}
	\label{div 8}
	8\mid h(-qp) \Leftrightarrow \left(\frac{-q}{p}\right)_4=1,
\end{equation}
where the quartic symbol is for $K=\mathbb{Q}$.

Note that~\eqref{div 8} is equivalent to $p$ splitting completely in $\mathbb{Q}(\sqrt{-1},\sqrt[4]{-q})$. 
Indeed if we consider the following extensions
\begin{displaymath}
\xymatrix{
& \mathbb{Q}(\sqrt{-1},\sqrt[4]{-q}) \ar@{-}[d] \\ & \mathbb{Q}(\sqrt{-1},\sqrt{-q}) \ar@{-}[d]\\
& \mathbb{Q}}
\end{displaymath}
we have that $p$ splits completely in $\mathbb{Q}(\sqrt{-1},\sqrt{-q})$, since $(-q/p)=1.$
If $\mathfrak{p}$ is a prime ideal in $\mathbb{Q}(\sqrt{-1},\sqrt{-q})$ above $p,$ then we have that $\mathfrak{p}$ splits completely in $\mathbb{Q}(\sqrt{-1},\sqrt[4]{-q})$ if and only if $p$ splits completely in $\mathbb{Q}(\sqrt{-1},\sqrt[4]{-q})$ if and only if $(-q/p)_4=1.$

We know that $\mathbb{Q}(\sqrt{-1},\sqrt{-q})$ is a principal ideal domain and so if $\pi$ is a generator of a prime ideal $\mathfrak{p}$ in $\mathbb{Q}(\sqrt{-1},\sqrt{-q})$ above $p$, since $\mathfrak{p}$ has degree 1, we have that 
	\[
	\left(\frac{-q}{p}\right)_{\mathbb{Q},\,4}=\left(\frac{-q}{\pi}\right)_{\mathbb{Q}(\sqrt{-1},\sqrt{-q}),\,4}.
	\]
Using again the Chebotarev Density Theorem, we obtain $\delta(8)=\frac{1}{4}$.


\subsection{The 16-rank}

The criterion for the divisibility of $h(-qp)$ by 16 is due to Leonard and Williams~\cite[Theorem 2]{Leonard and Williams}.
Let $p$ be a prime number congruent to 1 modulo 4, such that $\left(\frac{-q}{p}\right)=1$ and $\left(\frac{-q}{p}\right)_4=1$. There exist positive integers $u$ and $v$ satisfying $p=u^2-qv^2$. We will show that we can always find a solution with $u\equiv 1 \bmod 4$. Then
	\[
	16\mid h(-qp)\quad \Leftrightarrow\quad\left(\frac{u}{p}\right)_4=\left(\frac{2}{u}\right), \quad 
	\begin{array}{ll} \text{where } u\equiv 1 \bmod 4 \\ \text{and }p=u^2-qv^2, \,\,u,v\in \mathbb{Z}.
	\end{array}
	\]
We note that the first quartic symbol has both entries depending on $p$, since $u$ has to satisfy the relation $p=u^2-qv^2.$
Hence, we cannot interpret this condition as the splitting behaviour of $p$ in some normal extension of $\mathbb{Q}$ and thus we cannot directly apply the Chebotarev Density Theorem, as we did before. Instead, we will follow Koymans and Milovic's idea using Vinogradov's method.

Note that $u$ and $v$ are not uniquely determined. Let us see how we can compute these integers.
It is natural to work in the field $\mathbb{Q}(\sqrt{q})$. We observe that $p$ splits completely in $M_q=\mathbb{Q}(\sqrt{-1}, \sqrt{q})$, since we have $(-q/p)=1.$
We already know that $M_q$ is a principal ideal domain. Let $\zeta_{12}$ be a 12-th root of unity and $i=\sqrt{-1}$ be a fourth root of unity. We see that $ \mathcal{O}^\times_{M_q}=\langle \nu_q\rangle\times\langle\varepsilon_q\rangle,$ where
	\[
	\nu_q=\begin{cases}\zeta_{12}&\text{if }q=3,\\
	i&\text{otherwise,}
	\end{cases}
	\]
and
\begin{equation}
	\label{epsilon}
	\begin{aligned}
	\varepsilon_3&=\zeta_{12}-1,\\\varepsilon_7&=\frac{1}{2}(1-i)(\sqrt{7}+3),\\\varepsilon_{11}&=\frac{1}{2}(1-i)(\sqrt{11}+3),\\\varepsilon_{19}&=\frac{1}{2}(1+i)(3\sqrt{19}+13),\\\varepsilon_{43}&=\frac{1}{2}(1+i)(9\sqrt{43}-59),\\\varepsilon_{67}&=\frac{1}{2}(1+i)(27\sqrt{67}-221),\\\varepsilon_{163}&=\frac{1}{2}(1-i)(627\sqrt{163}+8005).
	\end{aligned}
\end{equation}
Note that $M_{q}/\mathbb{Q}$ is a normal extension with Galois group isomorphic to the Klein four group, say $\{1, \sigma ,\tau, \sigma\tau\}$, where $\sigma$ fixes $\mathbb{Q}(\sqrt{q})$ and $\tau$ fixes $\mathbb{Q}(\sqrt{-1}).$
\begin{displaymath}
\xymatrix{ & \mathbb{Q}(\sqrt{-1}, \sqrt{q})\ar@{-}[rd]^{<\sigma>} \ar@{-}[d]^{<\tau\sigma>} \ar@{-}[ld]_{<\tau>}\\
 \mathbb{Q}(\sqrt{-1}) \ar@{-}[rd] &  \mathbb{Q}(\sqrt{-q})\ar@{-}[d] & \mathbb{Q}(\sqrt{q})\ar@{-}[ld] \\
& \mathbb{Q}}
\end{displaymath}
We consider $\pi \in M_q$ such that $\pi $ generates one of the prime ideals $\mathfrak{p}$ in $\mathcal{O}_{M_q}$ above $p.$ Then there exists $u$, $v\in \mathbb{Z}$ such that $u+\sqrt{q} v=\mathrm{N}_{M_q/\mathbb{Q}(\sqrt{q})}(\pi)$ and so we get
	\[
	\pm p=\mathrm{N}_{M_q/\mathbb{Q}}(\pi)=(u+\sqrt{q} v)(u-\sqrt{q}v)=u^2-qv^2.
	\]
Looking at this equation mod 4, we have
\begin{equation}
\label{u and v equation}
p=u^2-qv^2,
\end{equation}
as wanted.
Thus we can choose
	\[
	u=\frac{\pi\sigma(\pi)+\tau(\pi)\tau(\sigma(\pi))}{2} \quad \text{and} \quad v=\frac{\pi\sigma(\pi)-\tau(\pi)\tau(\sigma(\pi))}{2\sqrt{q}}.
	\]
We now check that $u>0$ and that we can always find $u\equiv 1 \bmod 4$. In fact, if $u_0$ and $v_0$ are a solution of~\eqref{u and v equation}, then also $u+\sqrt{q}v=(u_0+\sqrt{q}v_0)(\sigma(\varepsilon_q)\varepsilon_q)^k$, for $k\in \mathbb{N}$, is a solution. Indeed 
\begin{align*}
	\mathrm{N}_{M/\mathbb{Q}(\sqrt{q})}(\pi)= \mathrm{N}_{M/\mathbb{Q}(\sqrt{q})}(\varepsilon_q\pi)&=\sigma(\varepsilon_q)\varepsilon_q(u_0+\sqrt{q}v_0).
\end{align*}
The map that describes the transformation of a given solution $(u,v)$ for the equation~\eqref{u and v equation}, by the multiplication with $\sigma(\varepsilon_q)\varepsilon_q$ modulo 4, is the following
\begin{equation}
\label{orbite}
	\mathbb{Z}/4\mathbb{Z}\times \mathbb{Z}/4\mathbb{Z}\longrightarrow\mathbb{Z}/4\mathbb{Z}\times \mathbb{Z}/4\mathbb{Z},
\end{equation}
\begin{align*}
	\quad\quad\quad\quad(u,v)&\longmapsto(2u-3v,2v-u),\quad \text{if }q=3,11, 19,163,\\\quad\quad\quad\quad(u,v)&\longmapsto(v,3u),\quad\quad\quad\quad \quad\,\,\,\,\text{if }q=7,\\
	\quad\quad\quad\quad(u,v)&\longmapsto(2u+3v,u+2v),\quad\, \text{if }q=43,67.
\end{align*}
The possibilities for $u$ and $v$ are $u=0,2$ and $v=1,3$, or $u=1,3$ and $v=0,2$ modulo 4 and so, looking at the orbits of the afore defined map, we note that those are of length 4 and that we always find exactly  one $u$ in each orbit that satisfies $u\equiv 1 \bmod 4$.


\subsection{Encoding the 16-rank of \texorpdfstring{$\Cl(K_{p,q})$}{Cl(K_{p,q})} into sequences \texorpdfstring{$\{a_{\mathfrak{n},q}\}_{\mathfrak{n}}$}{a_{n,q}}}

Let $q$ be a fixed prime in the set $Q$ and let $n_q$ be equal to $2q$. We define, for any $\alpha \in \mathbb{Z}[\sqrt{q}]$,
	\[
	\texttt{u}(\alpha)=\frac{1}{2}(\alpha+\tau(\alpha)).
	\]
Note that for every $w\in \mathcal{O}_{M_q}\setminus \{0\}$ the inequality $\texttt{u}(w\sigma(w))>0$ holds.
We define for any element $w\in \mathcal{O}_{M_q}$ coprime with $n_q$, 
\begin{equation}
	\label{[]}
	[w]:=\left(\frac{\texttt{u}(w\sigma(w))}{w}\right)_{M_q,4}\left(\frac{2}{\texttt{u}(w\sigma(w))}\right)_{\mathbb{Q},2}.
\end{equation} 
Hence $16\mid h(-qp)$ if and only if $[w]=1$, where $w$ is any element of $\mathcal{O}_{M_q}$ such that $N_{{M_q}/\mathbb{Q}}(w)=p$ and $\texttt{u}(w\sigma(w))\equiv 1 \bmod 4$.
We note that
\begin{equation}
	\label{8}
	\begin{aligned}
	\left(\frac{\texttt{u}(w\sigma(w))}{w}\right)_{{M_q},4}&=\left(\frac{(w\sigma(w)+ \tau(w\sigma(w)))/2}{w}\right)_{{M_q},4}\\&=\left(\frac{\tau(w\sigma(w))}{w}\right)_{{M_q},4}\left(\frac{8}{w}\right)_{{M_q},4}.
	\end{aligned}
\end{equation}
Let $\varepsilon_q$ be as in~\eqref{epsilon}. Then
\begin{align*}
	\left(\frac{\text{u}(\varepsilon^4 w\sigma(\varepsilon_q^4w))}{\varepsilon_q^4 w}\right)_{{M_q},4}&=\left(\frac{\text{u}(\varepsilon_q^4 w\sigma(\varepsilon_q^4 w))}{w}\right)_{{M_q},4}\\&=\left(\frac{\tau(\varepsilon_q^4 w\sigma(\varepsilon_q^4w))}{w}\right)_{{M_q},4}\left(\frac{8}{w}\right)_{{M_q},4}\\
	&=\left(\frac{\text{u}(w\sigma(w))}{w}\right)_{{M_q},4}.
\end{align*}
The equality
	\[
	\left(\frac{2}{\texttt{u}(\varepsilon_q^4w\sigma(\varepsilon_q^4w))}\right)_{\mathbb{Q},2}=\left(\frac{2}{\texttt{u}(w\sigma(w))}\right)_{\mathbb{Q},2},
	\]
is given by $2\texttt{u}(\varepsilon_q^4w\sigma(\varepsilon_q^4w)) \equiv 2\texttt{u}(w\sigma(w))\bmod 16.$
In fact, we have that $2\texttt{u}(\varepsilon_q^4w\sigma(\varepsilon_q^4w))=(\varepsilon_q\sigma(\varepsilon_q))^4w\sigma(w)+\tau(\varepsilon_q\sigma(\varepsilon_q))^4\tau(w\sigma(w))$ and a straightforward computation (with $w\sigma(w)=u+\sqrt{q}v$) shows that
\begin{align*}
	2\texttt{u}(\varepsilon_3^4 w\sigma(\varepsilon_3^4w)) &= 194\,u - 336 \,v,\\
	2\texttt{u}(\varepsilon_7^4w \sigma(\varepsilon_7^4w)) &= 64514\, u + 170688\,v,\\
	2\texttt{u}(\varepsilon_{11}^4w \sigma(\varepsilon_{11}^4w)) &= 158402 \,u + 525360\,v,\\
	2\texttt{u}(\varepsilon_{19}^4w \sigma(\varepsilon_{19}^4w)) &= 13362897602\, u + 58247520240\,v,\\
	2\texttt{u}(\varepsilon_{43}^4w \sigma(\varepsilon_{43}^4w)) &= 2351987525322434\, u - 15423013607227056\,v,\\
	2\texttt{u}(\varepsilon_{67}^4w \sigma(\varepsilon_{67}^4w)) &= 91052891016584133314\, u -745300033869597034608\,v,\\
	2\texttt{u}(\varepsilon_{163}^4w \sigma(\varepsilon_{163}^4w)) &= 269780589805913908506459977860802\, u \\
	&\quad+ 3444327998561165640260096561357040\,v.
\end{align*}
Hence we proved that
	\[
	[w]=[\varepsilon_q^4 w].
	\]
Note that 
	\[
	[w]=[\nu_q w].
	\]
Indeed for $q=3$, we see that $\zeta_{12}\sigma(\zeta_{12})=1$ and hence $\tau(\zeta_{12}\sigma(\zeta_{12}))=1$. Then $\texttt{u}(\zeta_{12}w\,\sigma(\zeta_{12}w))=\texttt{u}(w\sigma(w))$. For the other values of $q$, note that $i\sigma(i)=1$ and so $\tau(i\sigma(i))=1$. Then $\texttt{u}(iw\,\sigma(iw))=\texttt{u}(w\sigma(w)).$\newline
For $w\in \mathcal{O}_{M_q}$ such that $N_{{M_q}/\mathbb{Q}}(w)=p$, we define
	\[
	s(w)=
	\begin{cases} 1& \text{if } \texttt{u}(w\sigma(w))\equiv 1 \bmod 4,\\
	0&\text{otherwise.}
	\end{cases}
	\]
Then
	\[
	\sum_{i=0}^3 s(\varepsilon_q^i w)=1,
	\]
with $\varepsilon_q$ as in~\eqref{epsilon}, by looking at the orbits of the map~\eqref{orbite}.
Thus it is clear that 
	\[
	\sum_{i=0}^3 s(\varepsilon_q^i w)=\sum_{i=0}^3 s(\varepsilon_q^{i+k} w),
	\]
where $k\in \mathbb{N}.$

Having determined the action of the units $\mathcal{O}_{M_q}^\times$, on this sum and on $[\,\cdot\,], $ we see that the quantity $\sum_{i=0}^3s(\varepsilon_q^iw)[\varepsilon_q^i w]$ does not depend on the choice of the generator $w$ but only on the prime ideal $\mathfrak{p}$ above $p$. We have proved the following.
\begin{prop}
Let $p$ be a prime congruent to 1 modulo 4 and such that $(-q/p)=1$. Let $\mathfrak{p}$ be a prime ideal of $\mathcal{O}_{M_q}$, lying above p, with generator $w$. Then
	\[
	\sum_{i=0}^3s(\varepsilon_q^iw)[\varepsilon_q^i w]\frac{1}{2}\left(1+\left(\frac{-q}{\varepsilon_q^i w}\right)_{M_q,4}\right)=\begin{cases}1&\text{if }16\mid h(-qp),\\
	-1&\text{if } 8\mid h(-qp)\text{ but }16\nmid\,h(-qp),\\0&\text{otherwise.}
	\end{cases}
	\]
\end{prop}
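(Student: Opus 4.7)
The strategy is to reduce the left hand side to the obvious truth table by separating the ``$8\mid h(-qp)$'' factor from the ``$16\mid h(-qp)$ given $8\mid h(-qp)$'' sign. I would proceed in three steps.

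First, the quartic symbol $\bigl(\frac{-q}{\varepsilon_q^i w}\bigr)_{M_q,4}$ depends only on the fractional ideal generated by $\varepsilon_q^i w$, which is just $\mathfrak{p}$, so it is a constant $C$ independent of $i$. Pulling $\tfrac{1}{2}(1+C)$ out of the sum and invoking $\sum_{i=0}^{3}s(\varepsilon_q^i w)=1$ from \textsection 3.5, the left hand side collapses to $\tfrac{1}{2}(1+C)[w^*]$, where $w^*:=\varepsilon_q^{i^*}w$ is the unique orbit representative satisfying $\texttt{u}(w^*\sigma(w^*))\equiv 1\bmod 4$.

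Second, I would identify $C$ with the rational biquadratic symbol $(-q/p)_4$. Because $p$ splits completely in $M_q$, the prime $\mathfrak{p}$ has norm $p$ and residue field $\mathbb{F}_p$, so \eqref{n-th power residue symbol} yields $C\equiv(-q)^{(p-1)/4}\bmod\mathfrak{p}$. Since $(-q/p)=1$ the right side lies in $\{\pm 1\}\subset\mathbb{F}_p$, and because the four fourth roots of unity in $\mathcal{O}_{M_q}$ remain pairwise distinct modulo any odd degree-one prime, $C$ itself is $\pm 1$ and equals $(-q/p)_4$. By \eqref{div 8} the factor $\tfrac{1}{2}(1+C)$ is the indicator of ``$8\mid h(-qp)$''; when this event fails, the left hand side is $0$, matching the ``otherwise'' branch.

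Third, assuming $8\mid h(-qp)$, the left hand side reduces to $[w^*]$. Unpacking \eqref{[]} with $u:=\texttt{u}(w^*\sigma(w^*))\equiv 1\bmod 4$ and $p=u^2-qv^2$, and performing the same degree-one descent on the quartic factor $(u/w^*)_{M_q,4}$, gives $[w^*]=(u/p)_4\cdot(2/u)$ as a rational quantity in $\{\pm 1\}$. The Leonard--Williams criterion recalled in \textsection 3.4 then identifies this with $+1$ exactly when $16\mid h(-qp)$ and with $-1$ when $8\mid h(-qp)$ but $16\nmid h(-qp)$. The main technical obstacle is in this last step: one must verify that $(u/w^*)_{M_q,4}$ really descends to the classical rational symbol and lies in $\{\pm 1\}$, which amounts to a short calculation tracking the residue-class constraints on $u,v\bmod p$ coming from $u^2\equiv qv^2\bmod p$ together with the standing assumption $8\mid h(-qp)$; the first two steps are purely formal.
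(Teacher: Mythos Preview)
Your proposal is correct and matches the paper's argument, which is the discussion in \textsection 3.5 immediately preceding the proposition (the paper writes ``We have proved the following'' rather than giving a separate proof). You organize the steps a bit more explicitly---pulling out the constant quartic-symbol factor and naming the unique representative $w^*$---but the ingredients are identical: the ideal-dependence of $\bigl(\tfrac{-q}{\cdot}\bigr)_{M_q,4}$, the orbit fact $\sum_{i=0}^3 s(\varepsilon_q^i w)=1$, the identification of $\tfrac12(1+C)$ with the $8$-divisibility indicator via \eqref{div 8}, and the Leonard--Williams criterion for $[w^*]$; the technical point you flag about $[w^*]\in\{\pm 1\}$ is likewise left to the cited reference in the paper.
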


\noindent We define the sequence $\{a_{\mathfrak{n},q}\}_\mathfrak{n}$, indexed by ideals of $\mathcal{O}_{M_q}$, in the following way
\begin{equation}
	\label{sequence}
	a_{\mathfrak{n},q}:=\begin{cases}0&\text{if }(\mathfrak{n},n_q)=1,\\
	\sum_{i=0}^3s(\varepsilon_q^iw)[\varepsilon_q^i w]\frac{1}{2}\left(1+\left(\frac{-q}{\varepsilon_q^i w}\right)_{M_q,4}\right)&\text{otherwise,}
	\end{cases}
\end{equation}
where $w$ is any generator of the ideal $\mathfrak{n}$ coprime to $n_q$.


\section{Vinogradov’s method, after Friedlander, Iwaniec, Mazur and Rubin}

The version of Vinogradov's method that we are going to use is the one introduced by Friedlander \emph{et al\@.}\ in~\cite{spin of prime ideals}.
In order to use this powerful machinery, we need to verify that the sequence $(a_{\mathfrak{n},q})$ defined in~\eqref{sequence}, satisfies the hypothesis of~\cite[Proposition 5.2]{spin of prime ideals}. In other words, it remains to prove analogues of Propositions 3.7 and 3.8 of~\cite{16-rank} for our sequences $(a_{\mathfrak{n},q})$ with $q\in Q$ fixed and the field $M_q$, where $Q=\{3,7,11,19, 43,67,163\}$. In the literature, the sums that will appear are called sums of type I and sums of type II respectively.\newline
Once we have proved it, we will have that
	\[
	\sum_{\mathrm{N}\mathfrak{n}\leqslant x}a_{\mathfrak{n},q}\Lambda (\mathfrak{n})\ll_{\theta}x^{1-\theta}, \quad \quad \text{with } x>0,
	\]
for all $\theta<1/(49\cdot 64)=1/3136.$
This implies Theorem~\ref{Result}.


\subsection{Sums of type I}

In this section, we will adapt the proof of~\cite[Proposition 3.7]{16-rank} for our sequence $(a_{\mathfrak{n},q})$ and the field $M_q$ for $q$ fixed.

\noindent Let $\mathfrak{m}$ be an ideal of $\mathcal{O}_{M_q}$ coprime with $n_q$. We want to bound the following sum
	\[
	\begin{split}
	A(x)=A(x,\mathfrak{m}):=&\sum_{\substack{\mathrm{N}(\mathfrak{n})\leqslant x\\(\mathfrak{n},n_q)=1, \,\mathfrak{m}\mid\mathfrak{n}}}a_{\mathfrak{n},q}\\
	=&\sum_{\substack{\mathrm{N}(\mathfrak{n})\leqslant x\\
	(\mathfrak{n},n_q)=1, \,\mathfrak{m}\mid\mathfrak{n}}}\left(\sum_{i=0}^3s(\varepsilon_q^i\alpha)[\varepsilon_q^i \alpha]\frac{1}{2}\left(1+\left(\frac{-q}{\varepsilon_q^i \alpha}\right)_{M_q,4}\right)\right),
	\end{split}
	\]
where $\alpha$ is a generator of $\mathfrak{n}$.
We consider the integral basis $\{1,\eta_{1}^{(q)} ,\eta_{2}^{(q)},\eta_{3}^{(q)}\}$ (e.g. for $q=3$ we consider $\{1,\zeta_{12} ,\zeta_{12}^2,\zeta_{12}^3\}$) and a fundamental domain $\mathcal{D}_q$ as described in~\cite[Lemma 3.5]{16-rank} with $F=M_q$ and $n=4.$

In the case $q=3$, the torsion group of $\mathcal{O}_{M_3}^\times$, has twelve elements and then every ideal $ \mathfrak{n}$ has exactly twelve generators $\alpha\in \mathcal{D}_3.$ For the other cases, the torsion part of the unit group $\mathcal{O}_{M_q}^\times$ has four elements and so every ideal $ \mathfrak{n}$ has exactly four generators $\alpha\in \mathcal{D}_q.$ We recall that $s(\alpha)$ depends only on the congruence class of $\alpha$ modulo 4. See that 
\begin{align*}
	[\alpha]=&\left(\frac{\texttt{u}(\alpha\sigma(\alpha))}{\alpha}\right)_{M_q,4}\left(\frac{2}{\texttt{u}(\alpha\sigma(\alpha))}\right)_{\mathbb{Q},2}\\
	=&\left(\frac{\tau(\alpha)}{\alpha}\right)_{M_q,4}\left(\frac{\tau(\sigma(\alpha))}{\alpha}\right)_{M_q,4}\left(\frac{8}{\alpha}\right)_{M_q,4}\left(\frac{2}{\texttt{u}(\alpha\sigma(\alpha))}\right)_{\mathbb{Q},2}.
\end{align*}
The symbol $\left(\frac{2}{\texttt{u}(\alpha\sigma(\alpha))}\right)_{\mathbb{Q},2}$ depends only on the congruence class of $\alpha$ modulo 8 and, by Lemma~\ref{reciprocità quartica}, the symbol $\left(8/\alpha\right)_{M_q,4}$ depends only on the congruence class of $\alpha$ modulo $2^8.$ 
We set $F_q=q\cdot 2^8$ and we split the sum $A(x)$ into congruence classes modulo $F_q$. 

\noindent Using Lemma~\ref{reciprocità quartica}, the symbol $\left(-q/\alpha\right)_{M_q,4}$ depends only on $\alpha$ modulo $32q$ and  so we find that 
	\[
	A(x)=\frac{1}{12}\sum_{i=0}^3\sum_{\substack{\rho \bmod F_3\\
	(\rho,F_3)=1}}\,\,\frac{1}{2}\,\,\mu(\rho\varepsilon_3^i) A(x;\rho,\varepsilon_3^i) \left(1+\left(\frac{-3}{\varepsilon_3^i \alpha}\right)_{M_3,4}\right)\,\,\, \text{for }q=3,\]
and
	\[
	A(x)=\frac{1}{4}\sum_{i=0}^3\sum_{\substack{\rho \bmod F_q\\
	(\rho,F_q)=1}}\,\,\frac{1}{2}\,\,\mu(\rho,\varepsilon_q^i)A(x;\rho,\varepsilon_q^i)\left(1+\left(\frac{-q}{\varepsilon_q^i \alpha}\right)_{M_q,4}\right)\,\,\, \text{  otherwise,}
	\] 
where $\mu(\rho,\varepsilon_q^i)\in \{\pm 1,\pm i\}$ depends only on $\rho$ and $\varepsilon_q^i$ and with
	\[
	A(x;\rho,\varepsilon_q^i):=\sum_{\substack{\alpha\in \varepsilon_q^i\mathcal{D}_q,\, \mathrm{N}(\alpha)\leqslant x\\	\alpha\equiv \rho \bmod F_q\\ \alpha \equiv 0 \bmod \mathfrak{m}}}\left(\frac{\tau(\alpha)}{\alpha}\right)_{M_q,4}\left(\frac{\tau(\sigma(\alpha))}{\alpha}\right)_{M_q,4}.
	\]
Since $\left(-q/\alpha\right)_{M_q,4}\in \{0,\pm1,\pm i\}$, we obtain that
	\[
	\left|1+\left(\frac{-q}{\alpha}\right)_{M_q,4}\right|\leqslant2.
	\]
Hence we have the following bound
	\[
	|A(x)|\leqslant\frac{1}{4}\sum_{i=0}^3\sum_{\substack{\rho \bmod F_q\\(\rho,F_q)=1}}\,\,|A(x;\rho,\varepsilon_q^i)|
	\]
for every $q\in Q$.\newline
For each $\varepsilon_q^i$ and congruence class $\rho\bmod F_q$ with $(\rho, F_q)=1,$ we estimate $A(x;\rho, \varepsilon_q^i)$ separately. We consider the free $\mathbb{Z}$-module 
	\[
	\mathbb{M}=\mathbb{Z}\eta_{1}^{(q)}\oplus\mathbb{Z}\eta_{2}^{(q)}\oplus\mathbb{Z}\eta_{3}^{(q)}
	\]
of rank 3 and so we write the decomposition $\mathcal{O}_{M_q}=\mathbb{Z}\oplus \mathbb{M}$ viewing $\mathcal{O}_{M_q}$ as a free $\mathbb{Z}$-module of rank four. We write $\alpha$ uniquely as 
	\[
	\alpha=a+\beta, \quad\text{with } a \in \mathbb{Z},\,\,\beta\in \mathbb{M},
	\]
so our summation conditions become
\begin{equation}
\label{Conditions}
	a+\beta\in \varepsilon_q^i\mathcal{D}_q, \quad N(a+\beta)\leqslant x,\quad a+\beta\equiv \rho \bmod F_q, \quad a+\beta\equiv 0 \bmod \mathfrak{m}.
\end{equation}
If $\tau(\alpha)=\alpha$ and $\tau(\sigma(\alpha))=\alpha,$ we get no contribution to $A(x;\rho,\varepsilon_q^i)$, so we can assume $\tau(\alpha)\neq\alpha$ and $\tau(\sigma(\alpha))\neq\alpha.$
Next we are going to interchange the upper entry and the lower entry of our quartic residue symbols. Since $M_q $ is a principal ideal domain, let
	\[
	\tau(\alpha)-\alpha=\eta^{4} c_0 c\quad \text{and}\quad\tau(\sigma(\alpha))-\alpha=\eta'^{4} c_0' c'
	\]
with $c_0,c_0',c,c',\eta,\eta'\in \mathcal{O}_{M_q}$, $c_0,c_0'\mid F_q$ quadric-free, $\eta, \eta'$ that divide some power of $F_q$ and $(c,F_q)=(c',F_q)=1.$  
We can ensure $c \in \mathbb{Z}[\sqrt{-1}]$ and $c'\in \mathbb{Z}[\frac{1+\sqrt{-q}}{2}]$. In fact, we can take
	\[
	c=\frac{\tau(\alpha)-\alpha}{\sqrt{q}}=\frac{\tau(\beta)-\beta}{\sqrt{q}}\in \mathbb{Z}[\sqrt{-1}]	
	\]
	and
	\[
	c'=\frac{\tau(\sigma(\alpha))-\alpha}{i}=\frac{\tau(\sigma(\beta))-\beta}{i}\in \mathbb{Z}\left[\frac{1+\sqrt{-q}}{2}\right].
	\]
Hence we have 
\begin{align*}
	\left(\frac{\tau(\alpha)}{\alpha}\right)_{M_q,4}&=\left(\frac{a+\tau(\beta)}{\alpha}\right)_{M_q,4}=\left(\frac{\tau(\beta)-\beta}{\alpha}\right)_{M_q,4}\\&=\left(\frac{\eta^{4} c_o c}{\alpha}\right)_{M_q,4}=\left(\frac{c_0}{\alpha}\right)_{M_q,4}\left(\frac{c}{\alpha}\right)_{M_q,4}.
\end{align*}
Since we are working with $\alpha\equiv \rho \bmod F_q$, $(\rho, F_q)=1$ and $c$ and $c'$ depend only on $\beta$, we apply Lemma~\ref{reciprocità quartica} and we obtain
	\[
	\left(\frac{\tau(\alpha)}{\alpha}\right)_{M_q,4}=\tilde{\mu}\cdot\left(\frac{a+\beta}{c\mathcal{O}_{M_q}}\right)_{M_q,4}, 
	\] 
and the same for the other quadric symbol
	\[
	\left(\frac{\tau(\sigma(\alpha))}{\alpha}\right)_{M_q,4}=\tilde{\mu}'\cdot\left(\frac{a+\beta}{c'\mathcal{O}_{M_q}}\right)_{M_q,4},
	\]
with $\tilde{\mu},\tilde{\mu}'\in \{\pm 1, \pm i\}$ that depend only on $\rho$ and $\beta$.
Hence 
	\[
	A(x;\rho, \varepsilon_q^i)\leqslant\sum_{\beta \in \mathbb{M}}|T(x;\beta, \rho, \varepsilon_q^i)|,
	\]
where
	\[
	T(x;\beta, \rho, \varepsilon_q^i):=\sum_{\substack{a\in \mathbb{Z}\\ a+\beta \text{ sat.}\eqref{Conditions}}}\left(\frac{a+\beta}{c\mathcal{O}_{M_q}}\right)_{M_q,4}\left(\frac{a+\beta}{c'\mathcal{O}_{M_q}}\right)_{M_q,4}.
	\]
In order to study $\left(a+\beta/c\mathcal{O}_{M_q}\right)_{M_q,4}$, we want to replace $\beta $ with a rational integer modulo $c \mathcal{O}_{M_q}$.
However this is possible only for ideals of degree 1. For this reason, we factor  $c \mathcal{O}_{M_q}$. Since we choose $c\in \mathbb{Z}[\sqrt{-1}],$ we can define the ideals $\mathfrak{g}$ and $\mathfrak{l}\in \mathbb{Z}[\sqrt{-1}]$ in a unique way such that 
	\[
	(c)=\mathfrak{g}\mathfrak{l}
	\] 
with $l:=\textrm{N}_{\mathbb{Q}(\sqrt{-1})/\mathbb{Q}}(\mathfrak{l})$ a squarefree integer coprime with $n_q$ and $g:=\textrm{N}_{\mathbb{Q}(\sqrt{-1})/\mathbb{Q}}(\mathfrak{g})$ a squarefull integer coprime with $n_ql.$

Note that $c$ is coprime with $2q$. Hence, in the factorization of the ideal $\mathfrak{l}$, all the prime ideals that divide $\mathfrak{l}$ in $\mathbb{Z}[\sqrt{-1}]$ do not ramify in the quadratic extension $M_q.$ We can then apply Lemma~\ref{p not ramifies} for any prime ideal dividing $\mathfrak{l}$ and, using the Chinese Remainder Theorem, we find $\beta' \in \mathbb{Z}[\sqrt{-1}]$ such that $\beta \equiv \beta'\bmod \mathfrak{l}\mathcal{O}_{M_q}$.
We obtain that the upper entry of our quartic residue symbol is in $\mathbb{Z}[\sqrt{-1}].$ 

If a prime ideal $\mathfrak{p}$ that divides $\mathfrak{l}$ and splits in $M_q$, we apply Lemma~\ref{p splits} in order to reduce our quartic symbol to a quadratic one. If $\mathfrak{p}$ stays inert in $M_q$, then we have that $\mathfrak{p}$ has degree 1.
If we define $p:=\mathfrak{p}\cap\mathbb{Z}$, we find that $p\equiv 1 \bmod 4$, since $p$ splits in $\mathbb{Z}[\sqrt{-1}]$, and so $(p+1)/2$ is an odd number. 
Applying Lemma~\ref{p inerts} and combining all these results, we have
	\[
	\left(\frac{\alpha+\beta'}{\mathfrak{l}\mathcal{O}_{M_q}}\right)_{M_q,4}=\left(\frac{\alpha+\beta'}{\mathfrak{l}}\right)_{\mathbb{Q}(\sqrt{-1}),2}.
	\]
Using again the Chinese Remainder Theorem and the fact that $l$ is squarefree, we find a rational integer $b$ such that $\beta'\equiv b\bmod \mathfrak{l}.$ Hence, we have
	\[
	\left(\frac{a+\beta}{c\mathcal{O}_{M_q}}\right)_{M_q,4}=\left(\frac{a+\beta}{\mathfrak{g}\mathcal{O}_{M_q}}\right)_{M_q,4}\left(\frac{a+b}{\mathfrak{l}}\right)_{\mathbb{Q}(\sqrt{-1}),2}.
	\]
Note that $b$ depends on $\beta $ and not on $a$, because $c$ depends only on $\beta.$ We define the product of all the primes dividing $g$ as $g_0:=\prod_{p\mid g}p$ and the product of all the prime ideals dividing $\mathfrak{g}$ as $\mathfrak{g}^*:=\prod_{\mathfrak{p}\mid\mathfrak{g}}\mathfrak{p}.$ The quartic symbol $\left(\alpha/\mathfrak{g}\right)_{M_q,4}$ is periodic in the upper entry modulo $\mathfrak{g}^*,$ and so also modulo $g_0$, since $\mathfrak{g}^*$ divides it.
Since our $\beta$ is fixed, we can split $T(x;\beta, \rho,\varepsilon_q^i)$ into residue classes modulo $g_0$, and we obtain
	\[
	\mid T(x;\beta,\rho,\varepsilon_q^i)\mid\leqslant\sum_{a_0\bmod g_0}\bigg|\sum_{\substack{a\in \mathbb{Z}\\
	a+\beta \text{ sat.}\eqref{Conditions}\\ a\equiv a_0 \bmod g_0}}\left(\frac{a+b}{\mathfrak{q}}\right)_{\mathbb{Q}(\sqrt{-1}),2}\left(\frac{a+\beta}{c'\mathcal{O}_{M_q}}\right)_{M_q,4}\bigg|.
	\]
Now, we focus on the quartic symbol $\left(a+\beta/c'\mathcal{O}_{M_q}\right)_{M_q,4}$. We prove that it is the indicator function for $\gcd(a+\beta,c').$
Note that we have chosen $c'\in\mathbb{Z}\left[\frac{1+\sqrt{-q}}{2}\right] $ and that it is coprime with $n_q$. We factor the principal ideal $(c')\subset\mathbb{Z}\left[\frac{1+\sqrt{-q}}{2}\right]$ as $(c')=\prod_{i=1}^k\mathfrak{p}_i^{e_i}$ where all the $\mathfrak{p}_i$'s are prime ideals of $\mathbb{Z}\left[\frac{1+\sqrt{-q}}{2}\right]$ that do not ramify in $M_q$, since we are sure that they do not divide the discriminant thanks to the coprimality condition with $n_q$. We can then use the definition of quartic residue symbol that we gave and we have
	\[
	\left(\frac{a+\beta}{c'\mathcal{O}_{M_q}}\right)_{M_q,4}=\prod_{i=1}^k\left(\frac{a+\beta}{\mathfrak{p}_i\mathcal{O}_{M_q}}\right)_{M_q,4}^{e_i}.
	\]
To prove that our claim is true, we need to show that $\left((a+\beta)/\mathfrak{p}\mathcal{O}_{M_q}\right)_{M_q,4}=1$ whenever $\mathfrak{p}\nmid a +\beta.$ Using Lemma~\ref{p not ramifies}, 
instead of $\beta$ we can work with $\beta'\in \mathbb{Z}\left[\frac{1+\sqrt{-q}}{2}\right].$ Then we can apply Lemma~\ref{p splits}
for the prime ideals $\mathfrak{p}$ that split in $M_q.$ Instead, if $\mathfrak{p}$ stays inert in $M_q$, we have that $p:=\mathfrak{p}\cap \mathbb{Z}$ has to split in $\mathbb{Q}(\sqrt{-q})$ but not completely in $M_q.$ It follows that $p$ is inert in $\mathbb{Q}(\sqrt{-1})$ and so $(p+1)/2$ is an even number. Then we find that $\mathfrak{p}$ has degree 1 and we conclude our argument with Lemma~\ref{p inerts}.

Hence we obtain
	\[
	\left(\frac{a+\beta}{c'\mathcal{O}_{M_q}}\right)_{M_q,4}=\mathds{1}_{\gcd(a+\beta,c')=(1)}=\sum_{\substack{\mathfrak{d}\mid c'\\ \mathfrak{d}\mid a +\beta}}\mu(\mathfrak{d}),
	\]
where $\mu(\mathfrak{n})$ is the M\"obius function for an integral ideal $\mathfrak{n}$ defined by
	\[
	\mu(\mathfrak{n})=
	\begin{cases}(-1)^t&\text{if }\mathfrak{n}\text{ is the product of $t$ distinct prime ideals,}\\
	\,\,\,\,0&\text{otherwise.}
	\end{cases}
	\]
We obtain 
	\[
	\mid T(x;\beta,\rho,\varepsilon_q^i)\mid\,\,\leqslant\sum_{a_0\bmod g_0}\sum_{\substack{\mathfrak{d}\mid c'\\\mathfrak{d}\text{ squarefree}}}\mid T(x; \beta,\rho,\varepsilon_q^i, a_0,\mathfrak{d}) \mid, 
	\]
with
\begin{equation}
	\label{q somma}
	T(x; \beta,\rho,\varepsilon_q^i, a_0,\mathfrak{d}):=
	\sum_{\substack{a\in \mathbb{Z}\\ a+\beta \text{ sat.}\eqref{Conditions}\\ a\equiv a_0 \bmod g_0\\a+\beta\equiv0 \bmod \mathfrak{d}}}\left(\frac{a+b}{\mathfrak{l}}\right)_{\mathbb{Q}(\sqrt{-1}),2}.
\end{equation}
From now on we can follow the steps of Koymans and Milovic in~\cite[\textsection 4, p. 17]{16-rank} where our $\mathfrak{l}$ corresponds to $\mathfrak{q}$, our integral basis correspond to the generically written basis $\{1,\eta_{1}^{(q)} ,\eta_{2}^{(q)},\eta_{3}^{(q)}\}$ and our units $\varepsilon_q^i$ correspond to the units as $u_i$.


\subsection{Sums of type II}

In this section, we will adapt the proof of~\cite[Proposition 3.8]{16-rank} for our sequence $(a_{\mathfrak{n},q})$ and the field $M_q$, dealing with bilinear sums or sums of type II.

We consider $w$ and $z$ in $\mathcal{O}_{M_q}$ that are coprime with $n_q$. Recalling our definition of the symbol $[\,\cdot\,]$ in~\eqref{[]} and the observation of~\eqref{8}, we have
	\[
	[wz]=\left(\frac{8\tau(wz)\tau\sigma(wz)}{wz}\right)_{M_q,4}\left(\frac{2}{\texttt{u}(wz\sigma(wz))}\right)_{\mathbb{Q},2}.
	\]
We can then rewrite this equality as 
\begin{align*}
	[wz]&=\\
		&[w][z]\,\,Q_2(w,z)\left(\frac{\tau(w)}{z}\right)_{M_q,4}\left(\frac{\tau\sigma(w)}{z}\right)_{M_q,4}\left(\frac{\tau(z)}{w}\right)_{M_q,4}\left(\frac{\tau\sigma(z)}{w}\right)_{M_q,4},
\end{align*}
where 
	\[
	Q_2(w,z):=
		\left(\frac{2}{\texttt{u}(w\sigma(w))}\right)_{\mathbb{Q},2}\left(\frac{2}{\texttt{u}(z\sigma(z))}\right)_{\mathbb{Q},2}\left(\frac{2}{\texttt{u}(wz\sigma(wz))}\right)_{\mathbb{Q},2}.
	\]
We note that $Q_2(w,z)\in \{\pm1,\pm i \}$ depends only on the congruence class of $w$ and $z$ modulo 8.

Now we want to simplify the quartic residue symbols. We use Lemma~\ref{reciprocità quartica} to find some $\mu_1\in \{\pm 1, \pm i\}$ that depends on the congruence classes of $w$ and $z$ modulo 32, such that we have 
\begin{align*}
	\left(\frac{\tau(w)}{z}\right)_{M_q,4}\left(\frac{\tau(z)}{w}\right)_{M_q,4}
		&=\mu_1\left(\frac{z}{\tau(w)}\right)_{M_q,4}\left(\frac{\tau(z)}{w}\right)_{M_q,4}\\
		&=\mu_1\left(\frac{z}{\tau(w)}\right)_{M_q,4}\tau\left(\frac{z}{\tau(w)}\right)_{M_q,4}\\
		&=\mu_1 \left(\frac{z}{\tau(w)}\right)_{M_q,2},
\end{align*}
since $\tau(i)=i.$ For the remaining symbols, we can find some $\mu_2\in \{\pm 1, \pm i\}$ that depends on the congruence classes of $w$ and $z$ modulo 32, such that 
\begin{align*}
	\left(\frac{\tau\sigma(w)}{z}\right)_{M_q,4}\left(\frac{\tau\sigma(z)}{w}\right)_{M_q,4}
		&= \mu_2\left(\frac{z}{\tau\sigma(w)}\right)_{M_q,4}\tau\sigma\left(\frac{z}{\tau\sigma(w)}\right)_{M_q,4}\\
		&=\mu_2\,\, \mathds{1}_{\gcd(z,\tau\sigma(w))=(1)},
\end{align*}
since $\tau\sigma(i)=-i.$ We can then define $\mu_3:=\mu_1	\mu_2 Q_2(w,z)\in \{\pm 1, \pm i\}$ and we get
\begin{equation}
	\label{twisted multiplicativity}
	[wz]=\mu_3 [w][z] \left(\frac{z}{\tau(w)}\right)_{M_q,2}\, \mathds{1}_{\gcd(z,\tau\sigma(w))=(1)}.
\end{equation}
We consider $\{\alpha_\mathfrak{m}\}_\mathfrak{m}$ and $\{\beta_{\mathfrak{n}}\}_\mathfrak{n}$ two bounded sequences of complex numbers. Then
\begin{align*}
	\sum_{\textrm{N}(\mathfrak{m})\leqslant M}
		&\sum_{\textrm{N}(\mathfrak{n})\leqslant N}\alpha_\mathfrak{m}\beta_{\mathfrak{n}}a_{\mathfrak{m}\mathfrak{n}}=\\
		&\frac{1}{12^2}\sum_{w\in \mathcal{D}_3(M)}\sum_{z\in \mathcal{D}_3( N)}\alpha_w\beta_{z}\left(\sum_{i=0}^3s(\varepsilon_3^iwz)[\varepsilon_3^i wz]\frac{1}{2}\left(1+\left(\frac{-3}{\varepsilon_3^i wz}\right)_{M_3,4}\right)\right)
\end{align*}for $q=3 $ and for the other $q\in Q\setminus\{3\}$ we have
\begin{align*}
	\sum_{\textrm{N}(\mathfrak{m})\leqslant M}
		&\sum_{\textrm{N}(\mathfrak{n})\leqslant N}\alpha_\mathfrak{m}\beta_{\mathfrak{n}}a_{\mathfrak{m}\mathfrak{n}}=\\
		&\frac{1}{4^2}\sum_{w\in \mathcal{D}_q(M)}\sum_{z\in \mathcal{D}_q( N)}\alpha_w\beta_{z}\left(\sum_{i=0}^3s(\varepsilon_q^iwz)[\varepsilon_q^i wz]\frac{1}{2}\left(1+\left(\frac{-q}{\varepsilon_q^i wz}\right)_{M_q,4}\right)\right).
\end{align*}
using~\cite[Lemma 3.5]{16-rank} with $F=M_q$ and $n=4$ that tells us that every ideal of $\mathcal{O}_{M_3}$ has twelve different generators in the fundamental domain $\mathcal{D}_3$ and $\mathcal{O}_{M_q}$ has four different generators for $q\in Q\setminus\{3\}$ and defining $\alpha_w:=\alpha_{(w)}$ and $\beta_z:=\beta_{(z)}$.
We note that $s(\varepsilon_q^iwz)$ depends on the congruence class of $wz$ modulo 4 and that $[\varepsilon_q^i wz]=\mu_4 [wz]$ for some $\mu_4\in \{\pm 1,\pm i\}$ depending on the congruence class modulo 32, by Lemma~\ref{reciprocità quartica}.
What is more, the expression $\frac{1}{2}\left(\left(\frac{-q}{\varepsilon_q^i wz}\right)_{M_q,4}+1\right)$ takes values in the set $\{0,1,(1+i)/2,(1-i)/2\}$. This implies that
	\[
	\left|\frac{1}{2}\left(1+\left(\frac{-q}{\varepsilon_q^i wz}\right)_{M_q,4}\right)\right|\leqslant 1.
	\]

We focus on the congruence classes of $w$ and $z$ modulo $q\cdot 2^5$ and so we can bound the previous sums by  a finite number of sums of the form
	\[
	\mu_5\sum_{\substack{w\in \mathcal{D}_q(M)\\w\equiv \omega\bmod q\cdot2^5}}\sum_{\substack{z\in \mathcal{D}_q(N)\\ z\equiv \zeta \bmod q\cdot2^5}}\alpha_w\beta_z[wz],
	\]
where $\mu_5$ depends on the congruence classes $\omega$ and $\zeta$ modulo $q\cdot 2^5.$

We now use our simplification of the symbol $[wz]$ of~\eqref{twisted multiplicativity} and we replace $\alpha_w$ and $\beta_z$ with  $\alpha_w[w]$ and $\beta_z[z]$. Then, if we consider $\mu_6\in\{\pm 1,\pm i\}$ depending only on $\omega$ and $\zeta$, we have
	\[
	\mu_6\sum_{\substack{w\in \mathcal{D}_q(M)\\w\equiv \omega\bmod q\cdot2^5}}\sum_{\substack{z\in \mathcal{D}_q(N)\\ z\equiv \zeta \bmod q\cdot2^5}}\alpha_w\beta_z\left(\frac{z}{\tau(w)}\right)_{M_q,2}\, \mathds{1}_{\gcd(z,\tau\sigma(w))=(1)}.
	\]
The last thing to do is to check that the function 
	\[
	\gamma(w,z):=\left(\frac{z}{\tau(w)}\right)_{M_q,2}\,\mathds{1}_{\gcd(z,\tau\sigma(w))=(1)}
	\]
satisfies the properties (P1), (P2) and (P3) stated in~\cite[Lemma 4.1]{Bilinear sums}.
We can easily see that (P1) follows from Lemma~\ref{reciprocità quartica}, since we are working with congruence classes modulo $q\cdot 2^5$.
Property (P2) is satisfied by the properties of the quadratic residue symbol in $M_q$ given by Proposition~\ref{properties of Hilbert}, Definitions~\ref{n-th power residue symbol}, \ref{def n-th residue 1} and \ref{def n-th residue 2} together with the fact that the indicator function of the $\gcd$ is completely multiplicative and $\mathds{1}_{\gcd(z,\tau\sigma(w))=(1)}=\mathds{1}_{\gcd(w,\tau\sigma(z))=(1)}$.

The first part of property (P3) is given again by the properties of the quadratic residue symbol in $M_q$ and recalling that $\tau(w)$ divides the norm $\textrm{N}_{M_q/\mathbb{Q}}(w).$
For the second part of (P3), we define the function
	\[
	f(w):=\sum_{\xi\bmod \textrm{N}_{M_q/\mathbb{Q}} (w)}\gamma(w,\xi)=\sum_{\xi\bmod \textrm{N}_{M_q/\mathbb{Q}} (w)}\left(\frac{\xi}{\tau(w)}\right)_{M_q,2}\,\mathds{1}_{\gcd(\xi,\tau\sigma(w))=(1)}.
	\]
If $w$ and $w'$ are two elements that generate ideals coprime to $n_q$ and such that $\gcd(\textrm{N}_{M_q/\mathbb{Q}}(w),\textrm{N}_{M_q/\mathbb{Q}}(w'))=~1$, then we have that $f(ww')=f(w)f(w')$. Hence, in order to prove property (P3), we just need to prove that $f(w)=0$ for $w$ that generates a prime ideal coprime to $n_q$ of degree 1. We are sure that we can find such an element that divides a generic $w$, because we have by assumption that $\textrm{N}_{M_q/\mathbb{Q}}(w)$ is not squarefull. 

So let $w$ be an element that generates a prime ideal coprime to $n_q$ of degree 1. Then we have that $w$, $\sigma(w)$, $\tau(w)$, and $\tau\sigma(w)$ are all coprime to each other. 
By the Chinese Remainder Theorem, using these comprimality relations, the function $f(w)$, apart from a non-zero factor, becomes
\begin{multline*}
	\sum_{\xi\bmod\tau(w\sigma (w))}\left(\frac{\xi}{\tau(w)}\right)_{M_q,2}\,\mathds{1}_{\gcd(\xi,\tau\sigma(w))=(1)}=\\
	\sum_{\xi\bmod\tau(w)}\left(\frac{\xi}{\tau(w)}\right)_{M_q,2}\sum_{\xi\bmod\tau\sigma (w)}\,\mathds{1}_{\gcd(\xi,\tau\sigma(w))=(1)}.
\end{multline*}
We note that by~\cite[Lemma 3.6]{spin of prime ideals}, the Dirichlet character given by the quadratic residue symbol is not principal. Hence we obtain the desired result by basic properties of cancellation of Dirichlet characters in a complete set of representatives.

This proves~\cite[Proposition 3.8]{16-rank}. As we saw at the beginning of \textsection 4, we apply~\cite[Proposition 5.2]{spin of prime ideals} to obtain Theorem~\ref{Result}.


\subsection*{Acknowledgements}

This research forms part of my Master Thesis at the University of Leiden with the supervision of P. Koymans and P. Stevenhagen. I am grateful to them for introducing me to the subject and for very useful discussions. Additionally, I wish to thank B. Klopsch for his numerous suggestions which have significantly improved the paper.
I would like to thank the referee for their detailed and thoughtful report, which has improved the exposition of this paper.


\end{document}